\newtheorem{thm}{Theorem}
\newtheorem{lemma}[thm]{Lemma}
\newtheorem{prop}[thm]{Proposition}
\theoremstyle{definition}
\DeclareMathOperator{\var}{Var} 
\newcommand{\zz}[1]{\mathbb{#1}}
\def\to{\rightarrow}
\def\wt{\widetilde}
\def\wh{\widehat}
\def\vep{\varepsilon}
\def\phi{\varphi}
\def\indic{{\mathbf{1}}}
\begin{document}

\title[Critical BRW with Small Drift]{Critical Branching Random Walks
with Small Drift}

\author{Xinghua Zheng}

\address{Department of Information Systems\\
 Business Statistics and Operations Management \\
Hong Kong University of Science and Technology\\
Clear Water Bay, Kowloon, Hong Kong.} \email{xhzheng@ust.hk}

\begin{abstract}
We study critical branching random walks (BRWs) $U^{(n)}$
on~$\mathbb{Z}_{+}$ where for each $n$, the displacement of an
offspring from its parent has drift~$2\beta/\sqrt{n}$ towards the
origin and reflection at the origin. We prove that for
any~$\alpha>1$, conditional on survival to
generation~$[n^{\alpha}]$, the maximal displacement is
asymptotically equivalent to $(\alpha-1)/(4\beta)\sqrt{n}\log n$.
We further show that for a sequence of critical BRWs with such
displacement distributions, if the number of initial particles
grows like~$yn^{\alpha}$ for some $y>0$ and $\alpha>1$, and the
particles are concentrated in~$[0,O(\sqrt{n})],$ then the
measure-valued processes associated with the BRWs, under suitable
scaling converge to a measure-valued process, which, at any
time~$t>0,$ distributes its mass over~$\mathbb{R}_+$ like an
exponential distribution.
\end{abstract}

\keywords{ Branching random walk, maximal displacement,
Galton-Watson process, Feller diffusion, Dawson-Watanabe process}

\subjclass[2010]{Primary 60J80; secondary 60G57}

\date{\today}

\maketitle

\renewcommand{\baselinestretch}{1.0}
\normalsize

\section{Introduction}\label{sec:myIntro} \cite{DKW91} and \cite{Kesten95} studied the
maximal displacement of critical branching random walks (BRWs) on
the real line conditioned to survive for a large number of
generations. When the spatial displacement distribution has drift
$\mu>0$, the results in \cite{DKW91} imply that conditional on the
event that the BRW survives for $n$ generations, the maximal
displacement of a particle from the position of the initial
particle will be of order $O_P(n)$. The main result in
\cite{Kesten95}  asserts that if the spatial displacement
distribution has mean~0 and finite $( 4+\delta)$th moment, then
conditional on the event that the BRW survives for $n$
generations, the maximal displacement will be of
order~$O_P(\sqrt{n})$. The sharp difference between these two
results  gives rise to the following natural question: What
happens if the spatial motions have ``small drift''?

In this paper we supplement these results by showing what happens
for BRWs on the nonnegative integers $\zz{Z}_{+}$ with small
negative drift and reflection at $0$. Assume that $U^{(n)}$ is a
sequence of critical BRWs on the half line
$\zz{Z}_{+}=\{x\in\zz{Z}:x\geq 0\}$, each started by one particle
at the origin, that evolve as follows: (A) At each time
$t=1,2,\dotsc$, particles  produce offspring particles as in a
standard Galton-Watson process with a mean $1$, finite variance
$\sigma^2$ offspring distribution $\mathcal{Q}$. (B) Each
offspring particle then moves from the location of its parent
according to the transition probabilities
$\zz{P}=\zz{P}^{(\beta,n)}$, where $\beta\geq 0$,
\begin{align}\label{eq:Pbeta}
\zz{P}(x,x+1)&=\frac{1}{2} -\frac{\beta}{\sqrt{n}} \quad \text{for} \;\; x\geq 1;\\
\notag
\zz{P} (x,x-1)&=\frac{1}{2} +\frac{\beta}{\sqrt{n}} \quad \text{for} \;\; x\geq 1;\\
\notag \zz{P}(0,1)&=1.
\end{align}
The spatial motion is hence slightly biased towards the origin,
which serves as a reflecting barrier. Such a BRW can be used to
model, for example, a branching process occurring in a V-shaped
valley, where the particles, due to gravity, have a slight
tendency to move towards the bottom. In \cite{Kac47} the
afore-described slightly biased random walk is used to model the
motion of ``heavy Brownian particles'' in a container with its
bottom as a reflecting barrier. \cite{Kac47} also states about the
reflecting barrier that ``the elucidation of its influence on the
Brownian motion is of considerable theoretical interest''. In this
article we will study the influence of the barrier on the BRW.

Denote by $U^{(n)}_{t} (x)$ the number of particles in the $n$th
BRW $U^{(n)}$ at location~$x$ at time $t$, and by $R^{(n)}_{t}$
the location of the rightmost particle at time $t$. Our main
interest is in the conditional distribution of
$R^{(n)}_{[n^{\alpha }]}$ given that the process $U^{(n)}$
survives for $[n^{\alpha}]$ generations. For $\alpha <1$, the
effect of the drift $-2\beta /\sqrt{n}$ will be negligible
compared to diffusion effects over this time interval, and for
$\alpha =1$ it is just large enough to match the diffusion
effects. Thus, we will focus on the case  when $\alpha
>1$.

\begin{thm}\label{thm:range_super}
When $\beta>0$, for each $\alpha>1$ and $\varepsilon >0$, the
range $R^{(n)}_{[n^\alpha]}$ at time $[n^\alpha]$ satisfies
\begin{equation}\label{eq:main}
\lim_{n\to\infty}
   P\left(\left.\left|\frac{R^{(n)}_{[n^\alpha]}}{\sqrt{n}\log    n
   }-\frac{\alpha-1}{4\beta}\right|\geq \vep\;\right|
   G^{(n)}_{[n^\alpha]}\right) =0,
\end{equation}
where for any $k\in\zz{Z}_+,$
\begin{equation}\label{eq:surval}
  G^{(n)}_{k} = \{U^{(n)}\text{ survives to generation }
  k\}.
\end{equation}
\end{thm}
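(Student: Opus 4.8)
\section*{Proof proposal}

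The plan is to establish matching bounds on the unconditioned probability $P(R^{(n)}_k \ge x)$, where $k = [n^\alpha]$ and $x = c\sqrt{n}\log n$, and then to divide by $P(G^{(n)}_k) \sim 2/(\sigma^2 n^\alpha)$ — Kolmogorov's asymptotics for the survival of a critical finite-variance Galton--Watson process — to obtain \eqref{eq:main}. Writing $c^\ast = (\alpha-1)/(4\beta)$, the target is
\[
   P(R^{(n)}_k \ge x) = n^{-4\beta c - 1 + o(1)},
\]
so that the conditional probability is $n^{\alpha - 1 - 4\beta c + o(1)}$, which tends to $0$ when $c > c^\ast$ and (after the matching lower bound) to $1$ when $c < c^\ast$.

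The basic input is a sharp analysis of the single displacement walk $S^{(n)}$ with transition law \eqref{eq:Pbeta}. It is positive recurrent with an essentially geometric stationary law $\pi$ obeying $\pi([y,\infty)) = e^{-4\beta y/\sqrt{n}}(1+o(1))$ uniformly for $y = O(\sqrt{n}\log n)$, and with relaxation time of order $n$; since $\alpha>1$ forces $k \gg n$, the time-$k$ law is within super-polynomially small total variation of $\pi$, and the many-to-one lemma together with criticality gives $E\,\#\{v : X^{(n)}_v \ge x \text{ at generation } k\} = P(S^{(n)}_k \ge x) = n^{-4\beta c + o(1)}$. Markov's inequality already yields the conclusion for $c > \alpha/(4\beta)$, but this is off by a factor $n$: a particle cannot sit at the front in isolation, so $\{R^{(n)}_k \ge x\}$ is genuinely rarer than its mean count. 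The precise form of this is extracted from the second factorial moment: writing $\mathcal N := \#\{v : X^{(n)}_v \ge x \text{ at generation } k\}$ and decomposing over the generation $j$ of the most recent common ancestor of an ordered pair of distinct generation-$k$ particles,
\[
   E\,\mathcal N(\mathcal N - 1) = \sigma^2 \sum_{j<k} E\Bigl[\bigl(P_{S^{(n)}_j}\!\bigl(S^{(n)}_{k-j} \ge x\bigr)\bigr)^2\Bigr].
\]
By the walk estimates this sum is dominated, when $c > c^\ast$, by $j$ with $k-j$ of order $n$, for which the common ancestor is itself already above $x$ and each of the two subsequent sublineages stays above $x$ with probability of order $1$; this contributes $\asymp n^{1-4\beta c}$, so $E\,\mathcal N^2 \asymp n^{1-4\beta c}$. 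Thus, on $\{\mathcal N \ge 1\}$, $\mathcal N$ concentrates around $n$: a front particle drags with it a clump of order $n$ particles above $x$, carried by the subtree — of size $\asymp n$ by the Yaglom theorem — rooted at that already-high ancestor about $n$ generations back.

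For the upper bound ($c > c^\ast$) one writes $P(R^{(n)}_k \ge x) \le P(\mathcal N \ge \epsilon n) + P(R^{(n)}_k \ge x,\ \mathcal N < \epsilon n)$: the first term is at most $E\,\mathcal N/(\epsilon n) = \epsilon^{-1} n^{-4\beta c - 1 + o(1)}$ by Markov, and the second is controlled by showing that having a high particle but no clump forces the spatial excursion that created it to be completed within $o(n)$ generations, which costs at most $P(S^{(n)}_m \ge x) \le \exp(-x^2/(2m)) = n^{-\omega(\log n)}$. For the lower bound ($c < c^\ast$) the same second-moment computation — now with the far-ancestor term $n^{\alpha - 8\beta c}$ dominating — gives, via Paley--Zygmund, that any subtree run for $m$ generations produces a generation-$m$ particle above $(c^\ast-\epsilon)\sqrt{n}\log n$ with probability $\gtrsim 1/m$, provided $(c^\ast-\epsilon)\sqrt n\log n$ lies below the threshold attached to the horizon $m$. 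Conditionally on $G^{(n)}_k$ there are, with probability tending to $1$, of order $n^\alpha$ particles at generation $(1-\delta)k$ (the Feller-diffusion/Yaglom scaling of the total population), spawning that many essentially independent subtrees of horizon $\delta k \gg n$; letting $\delta = \delta_n \to 0$ slowly (so that the horizon still exceeds the walk's excursion scale and the product ``number of subtrees $\times$ per-subtree probability'' diverges), a union bound shows the probability that none of them has a generation-$k$ particle above $(c^\ast-\epsilon)\sqrt{n}\log n$ tends to $0$.

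The main obstacle is the upper bound — making the clump picture rigorous. One must show that, conditionally on the presence of a generation-$k$ particle above $x$, the process carries of order $n$ particles above $x$ with overwhelming probability; equivalently, one must rule out the intermediate scenarios in which only a subexponential-in-$n$ clump forms. This forces one to push the Yaglom and conditioned-tree structure of critical Galton--Watson processes through the spatial layer, and in particular to control how many particles of a subtree remain above $x$ given that it lives for of order $n$ generations from height $\approx x$ — precisely where the relaxation time of the reflected, slightly-biased walk (order $n$, not the naive excursion length $\asymp n\log n$) enters and produces the constant $(\alpha-1)/(4\beta)$ in place of the first-moment value $\alpha/(4\beta)$.
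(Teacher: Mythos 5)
Your overall picture is right: the answer $(\alpha-1)/(4\beta)$ rather than the naive first-moment value $\alpha/(4\beta)$ comes from an extra factor $n^{-1}$, and your route to it (unconditioned first and second moments of $\mathcal N=\#\{$particles above $x$ at generation $[n^\alpha]\}$, decomposition over the most recent common ancestor, Paley--Zygmund for the lower bound) is a genuinely different and standard-looking strategy, not the one the paper uses. The second-moment asymptotics you state are consistent with the random-walk estimates (the paper's Proposition~\ref{prop:RW_conv_rate}), and your lower bound via many essentially independent subtrees at generation $(1-\delta)[n^\alpha]$ is close in spirit to the paper's Proposition~\ref{prop:lower_bd}, which instead picks one uniformly chosen descendant per surviving family and gets $\mathrm{Bin}([n^\alpha/L(n)],\rho_{nL(n)})$ independent walks, avoiding any second-moment computation.

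The genuine gap is exactly the one you flag yourself: the upper bound. Bounding $P(\mathcal N\ge \epsilon n)$ by Markov is fine, but the complementary event $\{R^{(n)}_{[n^\alpha]}\ge x,\ \mathcal N<\epsilon n\}$ is not controlled. The assertion that ``having a high particle but no clump forces the spatial excursion that created it to be completed within $o(n)$ generations'' is the whole difficulty, not a reduction of it: a lineage can hover near $x$ for order $n$ generations while its relatives die out or stay below $x$, so the dichotomy is not deterministic, and a second moment alone cannot give an upper bound on $P(\mathcal N\ge 1)$. You would need a conditional (Yaglom-type) concentration statement for the spatial tree attached to a high particle, which you do not supply. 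The paper's Proposition~\ref{prop:upper_bd} sidesteps this entirely: using Lemmas~\ref{lemma:conditioning} and~\ref{lemma:timeJiggle} it moves the conditioning to generation $[n^\alpha]-n$, so that any particle already above $(\alpha-1+\vep/2)/(4\beta)\sqrt n\log n$ at that time must in addition have its family survive $n$ more generations --- an event of probability $\rho_n\sim 2/(\sigma^2 n)$ by \eqref{eq:survivalToGenM}. That is where the factor $n^{-1}$ enters, cleanly and at the level of first moments; the residual possibility of a particle below that level overshooting by $\vep/(8\beta)\sqrt n\log n$ in the last $n$ steps is killed by the reflection coupling and the Gaussian tail bound of Lemma~\ref{lemma:sy_srw}, which is superpolynomially small and beats the $O(n^\alpha)$ expected population. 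If you want to salvage your approach, the cleanest repair is to import this conditioning-switch device rather than to prove the clump concentration directly.
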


It is natural to consider in connection with the behavior of the
maximal displacement the process-level scaling behavior of the
BRWs.  To this end, consider a series of BRWs $\{X^{(n)}\}$ on the
set $\zz{Z}_{+}$ of nonnegative integers that evolve by the rules
described above, but with arbitrary initial states $X^{(n)}_{0}$.
(In Theorem~\ref{thm:range_super} the initial state consisted of a
single particle located at the origin $0$.)  For integers $x,k\geq
0$, set
\begin{equation}\label{eq:Xnk}
     X_{k}^{(n)}(x)=\#\text{ particles at } x \text{ at time } k.
\end{equation}
For any subset $I\subseteq \zz{R}_{+}$, let
\[
   X_{k}^{(n)}(I) = \sum_{x\in I}X_{k}^{(n)}(x).
\]
Finally, let
\[
   Z_k^{(n)}= X_{k}^{(n)}(\zz{Z}_+) = \sum_{x} X_{k}^{(n)}(x).
\]
Recall that by Kolmogorov's estimate for critical Galton-Watson
processes (see \eqref{eq:survivalToGenM} below), if the $n$th BRW
$X^{(n)}$ is initiated by $O(n^{\alpha})$ particles, then the
total lifetime of the process will be on the order of $O_{P}
(n^{\alpha})$ generations. If $\alpha <1$, then  the effect of the
drift over a time interval $[0,O(n^{\alpha})]$ is too small to be
felt. If $\alpha = 1$ then the drift will be just large enough to
be felt, and so for large $n$ the BRW $X^{(n)}$, suitably
rescaled, will look like a Dawson-Watanabe process on the halfline
$[0,\infty )$ with drift $-2\beta$ and reflection at $0$ (for the
convergence of ordinary BRWs to Dawson-Watanabe processes, see
\cite{watanabe68}, or \cite{etheridge,PerkinsSFnotes}). The case
we will focus on is again when $\alpha>1$, as in this case the
effect of the reflecting barrier at~$0$ dominates the diffusion
effects over the lifetime of the branching process, and the result
is an entirely different scaling behavior:

\begin{thm}\label{thm:measure_limit_super}
When $\beta>0$, assume that for some $\alpha >1$,
\begin{equation}\label{eq:total_mass_conv}
 \frac{Z^{(n)}_{0}}{n^\alpha} \to y>0,\quad\mbox{ as } n
\rightarrow \infty,
\end{equation}
and $\{X^{(n)}_{0}(\sqrt{n}\cdot)/n^\alpha\}_{n\geq 1}$ is tight,
i.e., for any $\vep>0$ there exists $C>0$ such that for all $n$,
\begin{equation}\label{eq:ini_measure_tight}
 \frac{X^{(n)}_{0}([C\sqrt{n},\infty))}{n^\alpha}\leq \vep.
\end{equation}
Then the measure-valued processes $\left(X^{(n)}_{[n^\alpha
t]}(\sqrt{n}\cdot)/n^\alpha:\ t>0\right)$ converge, in the sense
of convergence of finite-dimensional  distributions,  to a process
$(X_t: t>0)$, where $(X_t)_{t\geq 0}$ is such that for all $t\geq
0$ and $0\leq a<b$,
\begin{equation}\label{eq:theWeakLimit}
  X_t((a,b))= Y_t\cdot (\exp(-4\beta a) - \exp(-4\beta b)):=Y_t\cdot \pi((a,b)).
\end{equation}
Here $Y_{t}$ is the \emph{Feller diffusion}:
\begin{equation}\label{eq:feller}
    dY_t=\sigma\sqrt{Y_t}\,dW_t,\quad Y_0=y.
\end{equation}
\end{thm}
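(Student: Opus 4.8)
The plan is to separate the two scales on which the two ingredients of the limit live. The total population $Z^{(n)}_k=X^{(n)}_k(\zz{Z}_+)$ is unaffected by the spatial motion, hence is exactly a critical Galton--Watson process with offspring law $\sQ$ started from $Z^{(n)}_0\sim yn^\alpha$; by Feller's diffusion approximation $\big(Z^{(n)}_{[n^\alpha t]}/n^\alpha\big)_{t\ge0}$ converges in law, hence in the sense of finite-dimensional distributions, to the Feller diffusion $Y$ of \eqref{eq:feller}. On the other hand a single particle performs the reflected biased walk $\zz{P}$, and rescaling space by $\sqrt n$ and time by $n$ it converges to reflecting Brownian motion on $[0,\infty)$ with drift $-2\beta$ and unit diffusion coefficient, whose stationary law has density $4\beta e^{-4\beta x}$ on $\zz{R}_+$, i.e.\ is exactly $\pi$. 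Set $\mu^{(n)}_t:=X^{(n)}_{[n^\alpha t]}(\sqrt n\,\cdot)/n^\alpha$. For bounded continuous $\phi$ on $\zz{R}_+$ I would write, with $m=[n^\alpha t]$,
\[
   \ip{\mu^{(n)}_t,\,\phi}=\frac{Z^{(n)}_m}{n^\alpha}\cdot\frac{1}{Z^{(n)}_m}\sum_{v}\phi\!\Big(\frac{X_v}{\sqrt n}\Big),
\]
the sum over the particles $v$ alive at time $m$ ($X_v$ the location of $v$), and prove that $\ip{\mu^{(n)}_t,\phi}-(Z^{(n)}_m/n^\alpha)\ip{\pi,\phi}\to0$ in probability; combined with $Z^{(n)}_m/n^\alpha\Rightarrow Y_t$ and Slutsky's lemma this gives $\ip{\mu^{(n)}_t,\phi}\Rightarrow Y_t\ip{\pi,\phi}$, and running it for $\phi$ supported near $+\infty$ shows no mass escapes, so $\mu^{(n)}_t\Rightarrow Y_t\cdot\pi$; the joint statement over finitely many times follows by the same argument.

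The first technical input is a quantitative equilibration statement for $\zz{P}$: there is a sequence $s_n$ with $n\ll s_n\ll n^\alpha$ (e.g.\ $s_n=[n^{(1+\alpha)/2}]$) such that, uniformly over starting points $x$ in the bulk $[0,C\sqrt n]$, the law of the $\zz{P}$-walk after $s_n$ steps is within $o(1)$ in total variation of its stationary law $\rho^{(n)}$, and $\rho^{(n)}(\sqrt n\,\cdot)\Rightarrow\pi$. Indeed $\zz{P}$ is a birth--death chain with $\rho^{(n)}(x)\propto\big((\tfrac12-\tfrac{\beta}{\sqrt n})/(\tfrac12+\tfrac{\beta}{\sqrt n})\big)^x=(1-\tfrac{4\beta}{\sqrt n}+o(n^{-1/2}))^x$ for $x\ge1$, which rescales to the density $4\beta e^{-4\beta u}$; the mixing bound follows from a monotone coupling of the walk from $x$ with a stationary copy, whose coupling time from a bulk start is $O_P(n)$ because there the drift is negligible and the reflecting barrier can only help, so Markov's inequality gives total variation $O(n/s_n)\to0$. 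I would also record that the $\zz{P}$-walk has $O(\sqrt n)$-tight one-dimensional marginals at all times, so that the ancestor positions encountered below stay in the bulk with high probability.

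Next, condition on the genealogical forest $\sT^{(n)}$ generated by the $Z^{(n)}_0$ initial particles, using that the displacements are independent of $\sT^{(n)}$. Each $v$ alive at time $m$ descends from an initial particle and $X_v$ is the $\zz{P}$-walk run for $m\gg s_n$ steps from that location; by \eqref{eq:ini_measure_tight} all but an $\eps$-fraction of the initial particles lie in $[0,C\sqrt n]$, so for all but an $\eps$-fraction of the $v$'s, $\bE[\phi(X_v/\sqrt n)\mid\sT^{(n)}]=\ip{\pi,\phi}+o(1)$, whence $\bE[\ip{\mu^{(n)}_t,\phi}\mid\sT^{(n)}]=(Z^{(n)}_m/n^\alpha)\ip{\pi,\phi}+o(1)+O(\eps)$. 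For the conditional variance, if $v,w$ have most recent common ancestor $u$ at a generation $g\le m-s_n$, then given the position $X_u$ the two walks to $v$ and to $w$ run for $\ge s_n$ further, mutually independent steps, so $\cov(\phi(X_v/\sqrt n),\phi(X_w/\sqrt n)\mid\sT^{(n)})=\var\big(g_{m-g}(X_u)\mid\sT^{(n)}\big)=o(1)$, where $g_k(x)=\bE[\phi(\text{walk}_k\text{ from }x)/\sqrt n]$ and $X_u$ lies in the bulk w.h.p.; the remaining pairs lie in a common subtree rooted at generation $m-s_n$, and their number is $\sum_i N_i^2$ with $N_i$ the sizes at time $m$ of those subtrees. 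Each such subtree is an independent critical Galton--Watson process run for $s_n$ generations from one particle, so $\bE[N_i^2\mid\sT^{(n)}_{\le m-s_n}]=1+\sigma^2 s_n$ and hence $\bE\big[\sum_i N_i^2\,\big|\,Z^{(n)}_{m-s_n}\big]=Z^{(n)}_{m-s_n}(1+\sigma^2 s_n)=O_P(n^\alpha s_n)=o_P(n^{2\alpha})$. Therefore $\var\big(\ip{\mu^{(n)}_t,\phi}\mid\sT^{(n)}\big)\le n^{-2\alpha}\big(\|\phi\|_\infty^2\sum_i N_i^2+o(1)(Z^{(n)}_m)^2\big)=o_P(1)+O(\eps)$, and letting $\eps\downarrow0$ gives $\ip{\mu^{(n)}_t,\phi}-(Z^{(n)}_m/n^\alpha)\ip{\pi,\phi}\to0$ in probability.

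Finally, for the finite-dimensional distributions one combines the classical joint convergence $(Z^{(n)}_{[n^\alpha t_1]}/n^\alpha,\dots,Z^{(n)}_{[n^\alpha t_k]}/n^\alpha)\Rightarrow(Y_{t_1},\dots,Y_{t_k})$ with the last display applied at each $t_i$ and each $\phi_i$, and Slutsky's lemma, to get $(\ip{\mu^{(n)}_{t_i},\phi_i})_i\Rightarrow(Y_{t_i}\ip{\pi,\phi_i})_i$, which (the $\phi_i$ and $t_i$ being arbitrary) is exactly the assertion that the limit process is $X_t=Y_t\cdot\pi$ with $Y$ the Feller diffusion \eqref{eq:feller}. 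The main obstacle is the interplay of the two middle steps: one must choose $s_n$ simultaneously $\gg$ the $O(n)$ equilibration time of the reflected walk and $\ll n^\alpha$, and make the total-variation estimate — including the joint law of two lineages emanating from a common ancestor and the uniformity over ancestor positions — genuinely uniform over the random forest $\sT^{(n)}$; the heavy tails of the subtree sizes $N_i$ are harmless because only the first moment of $\sum_i N_i^2$ enters, but one must still control the random normalisations $Z^{(n)}_{m-s_n}$, $Z^{(n)}_m$ and the $\eps$-fraction of initial mass that \eqref{eq:ini_measure_tight} permits to sit far from the origin.
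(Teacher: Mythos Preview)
Your approach is correct and follows the same architecture as the paper's: factor the rescaled measure as total mass times normalized spatial profile, let the walk equilibrate over an intermediate window $s_n$ with $n\ll s_n\ll n^\alpha$ (the paper takes $s_n=n(\log n)^2$), and control the normalized profile by a conditional mean--variance computation in which the dominant variance contribution is $\sum_i N_i^2$ with $E\big[\sum_i N_i^2\mid Z^{(n)}_{m-s_n}\big]=Z^{(n)}_{m-s_n}(1+\sigma^2 s_n)=o_P(n^{2\alpha})$; this is exactly the paper's term~I. The paper conditions on the configuration at the intermediate time rather than on the full genealogical forest, which makes the variance bound slightly cleaner (no MRCA splitting is needed, since the subtrees from time $m-s_n$ are already independent), but the two organizations yield the same estimate.

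The one substantive technical difference is how the equilibration of the walk is established. The paper uses Kac's explicit transition-probability formula (Proposition~\ref{prop:RW_conv_rate}) to get $P^x(S_{s_n}\ge a\sqrt n)\to e^{-4\beta a}$ uniformly over bulk starts, and then tracks parity by hand via the four probabilities $p^0,p^1,p^{even},p^{odd}$. Your coupling argument is more robust, but note that $\zz{P}$ has period~$2$, so the statement ``the law after $s_n$ steps is within $o(1)$ in total variation of $\rho^{(n)}$'' is false as written. In your framework the cleanest fix is to observe that testing against continuous $\phi$ makes parity irrelevant (since $|\phi(x/\sqrt n)-\phi((x\pm1)/\sqrt n)|=o(1)$), so what you actually need---and what your $O(n)$ coupling-time bound does deliver---is $E^x[\phi(S_{s_n}/\sqrt n)]\to\ip{\pi,\phi}$ uniformly over $x\in[0,C\sqrt n]$, not full total-variation mixing.
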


Observe that we \emph{do not} require the initial measures
$X^{(n)}_{0}(\sqrt{n}\cdot)/n^\alpha$ to converge; what we only
require are (i) the total mass converges, and (ii) the particles
are not too spread out. In particular, we cannot guarantee that
$X^{(n)}_{0}(\sqrt{n}\cdot)/n^\alpha \Longrightarrow X_0$. Theorem
\ref{thm:measure_limit_super} says that one has finite dimensional
convergence on $(0,\infty).$

The Feller diffusion $(Y_t)$ defined by \eqref{eq:feller} is the
limit of $(Z^{(n)}_{[n^\alpha t]}/n^\alpha )$:
\begin{equation}\label{eq:feller_conv}
   \left( \frac{Z^{(n)}_{[n^\alpha t]}}{n^\alpha}\right) \Rightarrow \left(Y_t\right) \quad\mbox{ on  }
    D([0,\infty);\zz{R}),
\end{equation}
see \cite{Feller39,Feller51}. See Chapter XI of \cite{RevuzYor}
for some basic properties of the Feller diffusion. The limiting
process $X_t$ hence can be described in this way:  its total mass
evolves like the Feller diffusion $Y_{t}$, but the
\emph{distribution} of the mass~$Y_{t}$ at any time $t>0$ is
always the exponential distribution $\pi$. As is proved in
\cite{Kac47}, the exponential distribution $\pi$ is the stationary
distribution of a diffusion process obtained by suitably
normalizing the RWs defined by \eqref{eq:Pbeta} and taking limit
as $n\to\infty$.

The following elementary relation between the expected number of
particles at a site $y$ in generation $m$  for a critical BRW and
the $m$-step transition probability $P(S_m = y)$ of the random
walk will be frequently used: if the critical BRW is started by
one particle at site $x$, and $U_{m} (y)$ stands for the number of
particles at site $y$ in generation $m$, then
\begin{equation}\label{eq:fundamental}
    EU_{m} (y)= P(S_m = y\,|\, S_0=x).
\end{equation}
This is easily proved by induction on $m$, by conditioning on the
first generation and using the fact the the offspring distribution
has mean 1.

The structure of this article is as follows: in Section 2 we prove
some properties of the random walks on the half line, in Section 3
we prove Theorem \ref{thm:range_super}; Theorem
\ref{thm:measure_limit_super} is proved in Section 4.

\medskip \noindent \textbf{Notation.}
We follow the custom of writing $f\sim g$ to mean that the ratio
$f/g$ converges to~1. For any $a,b\in\zz{R}$, $a\wedge
b:=\min(a,b)$ and $a\vee b:=\max(a,b)$. Throughout the paper, $c,
C$ \hbox{etc.} denote generic constants whose values may change
from line to line. For any $x\geq 0$, $[x]$ denotes its integer
part, i.e., the greatest integer no greater than $x$. The notation
$Y_{n}=o_{P} (f (n))$ means that $Y_{n}/f (n) \rightarrow 0$ in
probability; and $Y_{n}=O_{P} (f (n))$ means that the sequence
$|Y_{n}|/f (n)$ is tight.

\section{Random Walks} Throughout this article we use the notation
$\{S_m\}_{m\geq 0}=\{S_m^{(\beta,n)}\}$ to denote a random walk
with transition probabilities $\zz{P}=\zz{P}^{(\beta,n)}$ defined
by equation \eqref{eq:Pbeta}; use $\{\wt{S}_m\}_{m\geq 0}$ to
denote the simple random walk on $\zz{Z}_{+}$ with reflection at
$0$; and use $\{\wh{S}_m\}_{m\geq 0}$ to denote the simple random
walk on $\zz{Z}$. Furthermore, for any such random walks, e.g.,
$\{S_m\}$, for any $x,y\in\zz{Z}_+$ and $m\in\zz{N}$, $P^x(S_m=y)
=P(S_m=y\,|\,S_0=x) $ is the probability that $S_m$ started at $x$
finds its way to site $y$ in $m$ steps.

The following lemma says that the random walk $S_m$ which has
drift towards the origin is stochastically dominated by the
reflected simple random walk $\wt{S}_m$.
\begin{lemma}\label{lemma:coupling}For any $\beta>0$,
$n\in\zz{N}$ and  $x\in\zz{Z}_{+}$, we can build random walks
$\{S_m\}_{m\geq 0} \sim \zz{P}^{(\beta,n)}$ and
$\{\wt{S}_m\}_{m\geq 0}\sim \wt{\zz{P}}$ on a common probability
space so that
$$S_0=\wt{S}_0=x, \text{and } S_m\leq \wt{S}_m,\ \mbox{for all}\ m.$$
\end{lemma}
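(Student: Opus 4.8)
The plan is to build both walks on a single probability space by driving every step with the same uniform random variable, i.e.\ a monotone (threshold) coupling, and then to verify that the ordering is preserved by a short induction on $m$. First I would fix an i.i.d.\ sequence $U_1,U_2,\dots$ of Uniform$[0,1]$ variables and put $p:=\tfrac12-\beta/\sqrt n$, which lies in $[0,\tfrac12)$ since $\beta>0$ and $\zz{P}^{(\beta,n)}$ is a genuine transition kernel. Starting from $S_0=\widetilde S_0=x$, I would define the $m$th steps as follows: at a site $\ge 1$ the walk $\widetilde S$ moves up if $U_m<\tfrac12$ and down otherwise, while $S$ moves up if $U_m<p$ and down otherwise; at the site $0$, each walk moves deterministically to $1$. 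This gives each walk its prescribed marginal law, and, crucially, away from the origin the event that $S$ moves up is contained in the event that $\widetilde S$ moves up.

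Next I would prove by induction on $m$ not merely that $S_m\le\widetilde S_m$ but the sharper invariant that $D_m:=\widetilde S_m-S_m$ is a nonnegative \emph{even} integer. Evenness comes for free: every step of either walk---including the forced step out of $0$---changes its position by $\pm1$, so $S_m\equiv S_0+m\equiv\widetilde S_0+m\equiv\widetilde S_m\pmod 2$, using $S_0=\widetilde S_0$. For the nonnegativity, assuming $D_m\ge 0$ and even, I would split into $D_m\ge 2$ and $D_m=0$. If $D_m\ge 2$ then $\widetilde S_m\ge 2$, so whatever happens $S_{m+1}\le S_m+1\le\widetilde S_m-1\le\widetilde S_{m+1}$. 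If $D_m=0$ with common value $0$, both walks jump to $1$; if $D_m=0$ with common value $s\ge 1$, the threshold rule forces $(S_{m+1},\widetilde S_{m+1})$ to be $(s{+}1,s{+}1)$, $(s{-}1,s{+}1)$, or $(s{-}1,s{-}1)$ according as $U_{m+1}<p$, $p\le U_{m+1}<\tfrac12$, or $U_{m+1}\ge\tfrac12$, so $S_{m+1}\le\widetilde S_{m+1}$ in every case. This closes the induction and yields the lemma.

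The one place where care is genuinely required, and the step I would flag as the main (if modest) obstacle, is the interaction with the reflecting barrier at $0$: the bad configuration would be $S_m=0$, $\widetilde S_m=1$, since then $S$ is forced up to $1$ while $\widetilde S$ may drop to $0$, destroying the order. The evenness half of the invariant is introduced precisely to rule this out---because the two walks start at the same site, $\widetilde S_m-S_m$ is always even and hence never equal to $1$, so the forced moves at the origin occur only when the walks are either together at $0$ or at distance $\ge 2$, both of which are handled above. (When $\beta=0$ the coupling makes $S$ and $\widetilde S$ identical and the statement is trivial; $\beta>0$ is used only through $p<\tfrac12$.)
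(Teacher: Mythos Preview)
Your proof is correct and follows essentially the same approach as the paper's: a monotone threshold coupling driven by common uniforms, combined with the parity observation that $\widetilde S_m-S_m$ is always even (so the dangerous configuration $(S_m,\widetilde S_m)=(0,1)$ never arises), and the same case split $D_m\ge 2$ versus $D_m=0$ in the induction. The only cosmetic differences are that the paper uses the complementary threshold (``move down if $U\le\tfrac12+\beta/\sqrt n$'') and is slightly less explicit about what happens at steps where the walks are apart; your write-up is, if anything, a bit more careful on these points.
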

\begin{proof} It suffices to prove the result for  the case $x>0$; the case
$x=0$ then follows since $ S_1=\wt{S}_1=1.$

Let $S_0=\wt{S}_0=x$. At time 1 sample a $U_1\sim\text{ Unif
}(0,1)$. If $U_1\leq 1/2 +\beta/\sqrt{n}$, then let $S_1=x-1$,
otherwise let $S_1=x+1$. In the meanwhile, if $U_1\leq 1/2 $, then
let $\wt{S}_1=x-1$, otherwise let $\wt{S}_1=x+1$. Clearly
$\{S_0,S_1\}$ and $\{\wt{S}_0,\wt{S}_1\}$ follow their laws
respectively and $S_1\leq \wt{S}_1$. Now suppose that we have
built $\{S_m\}$ and $\{\wt{S}_m\}$ up to time $m$, and we have
$S_m\leq \wt{S}_m$. If $S_m<\wt{S}_m$, we must have $S_m\leq
\wt{S}_m -2$ since at each step the difference between the jumps
is either 0 or 2; now because at each step the random walks can at
most jump 1, at time $m+1$, we must still have $S_{m+1}\leq
\wt{S}_{m+1}$. In the other case when $S_m=\wt{S}_m$, if $S_m>0$
then we can build $S_{m+1}\leq \wt{S}_{m+1}$ just as at time 0;
otherwise $S_m=0$, then necessarily $S_{m+1}=\wt{S}_{m+1}=1$.
Thus, we have proved that we can build $\{S_m\}$ and
$\{\wt{S}_m\}$ up to time $m+1$. By induction, the conclusion
holds.
\end{proof}

\begin{lemma}\label{lemma:sy_srw} For any
$k\in\zz{N}$, any $x\geq k$, and any $m\geq 0,$
\begin{equation}\label{eqn:reflection}
   P^x(\wt{S}_m\geq x+k)
   \leq  P^0(\max_{i\leq m}|\wh{S}_i|\geq k).
\end{equation}
Moreover, there exist $ C>0$ and $ b>0$ such that
\begin{equation}\label{eqn:exp_decay}
   P^0\left(\max_{i\leq m}|\wh{S}_i|\geq k\right)\leq C\exp\left(-\frac{bk^2}{m}\right),\ \mbox{for all}\ m.
\end{equation}
\end{lemma}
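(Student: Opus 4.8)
The plan is to prove the two assertions separately, starting with the reflection inequality~\eqref{eqn:reflection} and then the Gaussian-type tail bound~\eqref{eqn:exp_decay}.

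For~\eqref{eqn:reflection}, the idea is to couple the reflected walk~$\wt S$ started at~$x\ge k$ with an unrestricted simple random walk~$\wh S$ started at~$0$ run on the same increments, and observe that reflection at~$0$ can only \emph{decrease} the distance travelled upward from the start. More precisely, I would construct both walks from a common i.i.d.\ sequence of $\pm1$ steps: the walk $\wh S$ simply adds them, while $\wt S$ adds them but folds the path back whenever it would go negative. A clean way to make this rigorous is via the standard identity that a reflected walk can be realized as $\wt S_m = |\,x + \wh{\text{(partial sums)}}\,|$ is \emph{not} quite right for nearest-neighbor reflection, so instead I would argue by a direct pathwise comparison / induction: let $D_m=\wh S_m$ be the displacement of the free walk from~$0$, and show by induction on~$m$ that $\wt S_m \le x + \max_{i\le m} D_i$ and more usefully that $\wt S_m - x \le \max_{i\le m}|D_i|$ whenever $\wt S$ stays below a level that could only be reached by first having $|D_i|$ large. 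Actually the cleanest route: as long as $\wt S$ has not yet hit~$0$, it agrees with $x + D_m$, so $\wt S_m \ge x+k$ forces $D_i \ge k$ for some $i\le m$; and once $\wt S$ has hit~$0$, to climb back to $x+k$ it must rise by at least $x+k \ge k$ from a visit to~$0$, which again forces the free walk (restarted at that visit) to have range at least~$k$, and by the strong Markov property and shift-invariance this is bounded by $P^0(\max_{i\le m}|\wh S_i|\ge k)$. Stringing these cases together with a union/absorption argument gives~\eqref{eqn:reflection}.

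For~\eqref{eqn:exp_decay}, this is a classical maximal inequality for the simple random walk. I would apply the reflection principle for $\wh S$ (valid since it is a symmetric nearest-neighbor walk on $\zz Z$) to get $P^0(\max_{i\le m}\wh S_i \ge k) \le 2\,P^0(\wh S_m \ge k)$, and symmetrically for the minimum, hence $P^0(\max_{i\le m}|\wh S_i|\ge k)\le 4\,P^0(\wh S_m\ge k)$. Then a standard Chernoff/exponential-moment bound: for $\theta>0$, $E^0[e^{\theta\wh S_m}] = (\cosh\theta)^m \le e^{m\theta^2/2}$, so $P^0(\wh S_m\ge k)\le \exp(m\theta^2/2 - \theta k)$, and optimizing $\theta = k/m$ (legitimate; if $k\ge m$ the probability is~$0$ anyway) yields $P^0(\wh S_m\ge k)\le \exp(-k^2/(2m))$. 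Combining gives~\eqref{eqn:exp_decay} with $C=4$ and $b=1/2$ (any $b<1/2$ works, with the $k\ge m$ case handled trivially since then $\wh S_m$ cannot reach~$k$).

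I expect the only real subtlety to be the pathwise comparison in~\eqref{eqn:reflection}: one must be careful that the nearest-neighbor reflected walk is \emph{not} simply the absolute value of a free walk (the parity and the behavior at~$0$ differ), so the induction has to be set up on the right quantity. The key structural fact making it work is that between consecutive visits of $\wt S$ to~$0$ it moves exactly like an unrestricted simple random walk, so any upward excursion of size~$\ge k$ of $\wt S$ is mirrored by an increment of size~$\ge k$ in some time-shifted copy of $\wh S$; the strong Markov property and translation invariance of $\wh S$ then let us dominate all these shifted copies by the single quantity $P^0(\max_{i\le m}|\wh S_i|\ge k)$. Everything after that — the reflection principle and the Chernoff bound — is routine.
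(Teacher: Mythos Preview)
Your overall strategy is sound, but two points deserve comment.

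For~\eqref{eqn:reflection}, your case split (does $\wt S$ hit $0$ or not?) matches the paper's, but your handling of the second case is more laborious than needed. Once $\wt S$ hits $0$, you try to track the subsequent climb back to $x+k$, invoking excursions and time-shifted copies of $\wh S$; making this precise is delicate (the last visit to $0$ is not a stopping time, and a naive union over excursions does not collapse to a single bound without an extra constant). The paper's observation is much cleaner: if $\wt S$ hits $0$ at all, then \emph{before} that hit it behaves as a free walk and has already travelled a distance $x\ge k$ from its start, so $\max_{i\le m}|\wh S_i|\ge k$ is automatic---there is no need to look at what happens afterward. Incidentally, your dismissal of the identity $\wt S_m \stackrel{d}{=} |x+\wh S_m|$ is unwarranted: for the \emph{nearest-neighbour} reflected walk with $\zz P(0,1)=1$, this identification is exact (check the transitions at $0$ and at $y\ge 1$), and it yields~\eqref{eqn:reflection} in one line via $P^0(|\,x+\wh S_m|\ge x+k)\le P^0(|\wh S_m|\ge k)$.

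For~\eqref{eqn:exp_decay}, your route (reflection principle to pass from the running maximum to the endpoint, then a Chernoff bound via $(\cosh\theta)^m\le e^{m\theta^2/2}$) is correct and gives explicit constants $C=4$, $b=1/2$. The paper instead applies Doob's submartingale maximal inequality directly to $\exp(\theta|\wh S_i|^2)$ (a submartingale since $|\wh S_i|^2$ is one and $\exp$ is convex increasing), together with the bound $\sup_m E\exp(b|\wh S_m|^2/m)<\infty$. Both arguments are standard; yours is slightly more elementary and self-contained, while the paper's avoids the reflection principle at the cost of quoting the sub-Gaussian moment bound.
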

\begin{proof}
Inequality (\ref{eqn:reflection}) holds because in order that
$\wt{S}_m\geq x+k$, either the random walk $\{\wt{S}_i\}_{i\leq
m}$ has never visited 0, in which case it just evolves like a
simple random walk whose maximal deviation from $x$ is no less
than $\wt{S}_m-\wt{S}_0\geq k$, or the random walk
$\{\wt{S}_i\}_{i\leq m}$ has visited 0 in which case it evolves
like a simple random walk before hitting 0, and the maximal
deviation from $x$ before time $m$ is no less than $x \geq k$.

Now let us prove (\ref{eqn:exp_decay}). First recall the fact that
for the simple random walk $\{\wh{S}_m\,|\,\wh{S}_0=0\}$, there
exists $b>0$ such that \begin{equation}\label{eqn:expbd_srw}\sup_m
E\exp\left(b\frac{|\wh{S}_m|^2}{m}\right):=C <\infty,\end{equation}
see, e.g., Exercise 2.6 in \cite{lawler07}. Now by the submartingale
maximal inequality,  we get
\[
   P^0\left(\max_{i\leq m}|\wh{S}_i|\geq k\right)
   = P^0\left(\max_{i\leq m}\;\exp(\theta|\wh{S}_i|^2)   \geq \exp(\theta k^2)\right)
   \leq \frac{E\exp(\theta |\wh{S}_m|^2 )}{\exp(\theta k^2)}.
\]
Inequality (\ref{eqn:exp_decay}) follows by taking $\theta$ to be
$b/m$ and using (\ref{eqn:expbd_srw}).
\end{proof}

Next lemma indicates that if two random walks $S_m^1$ and $S_m^2$
have the same drift $2\beta/\sqrt{n}$ towards the origin, and are
such that $S_0^2-S_0^1$ is a positive even number, then $S_m^1$ is
stochastically dominated by $S_m^2$.
\begin{lemma}\label{lemma:comparison} For any fixed $\beta>0$,
$n\in\zz{N},$  $0\leq i_1\neq i_2$, and a random walk
$\{S_m^1\}_{m\geq 0} \sim \zz{P}^{(\beta,n)}$ with $S^1_0=2i_1$,
we can build a coupling random walk $\{S_m^2\}_{m\geq 0}\sim
\zz{P}^{(\beta,n)}$ with $S^2_0=2i_2$ on a possibly extended
probability space such that
\begin{equation}\label{eq:RW_dominance}
  \left\{\aligned
   &S^1_m \leq S^2_m,\ \mbox{for all}\ m, & \mbox{ if } i_1 <
   i_2\\
   &S^1_m \geq S^2_m,\ \mbox{for all}\ m, & \mbox{ if } i_1 >
   i_2\\
  \endaligned\right.
\end{equation}
Similar conclusion holds if we change the initial positions of
$\{S_m^1\}$ and $\{S_m^2\}$ to  $S^1_0=2i_1+1,  S^2_0=2i_2+1.$
\end{lemma}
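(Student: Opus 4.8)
The plan is to construct the coupling step by step, mirroring the proof of Lemma~\ref{lemma:coupling}, but now keeping track of the \emph{parity} of the gap $S^2_m - S^1_m$ rather than just its sign. I would treat the case $i_1 < i_2$ (so we want $S^1_m \le S^2_m$ for all $m$) and note the other case is symmetric by relabeling. Start with $S^1_0 = 2i_1 < 2i_2 = S^2_0$, so the initial gap $S^2_0 - S^1_0$ is a positive even integer. The key structural observation, exactly as in Lemma~\ref{lemma:coupling}, is that because each walk moves by $\pm 1$ at each step, the gap $S^2_m - S^1_m$ changes by $0$ or $\pm 2$ per step, hence \emph{remains even} for all $m$; therefore ``$S^1_m \le S^2_m$'' is equivalent to ``$S^1_m < S^2_m$ or $S^1_m = S^2_m$'' with no possibility of the gap being $\pm 1$, which is what makes the reflection at $0$ harmless.

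The coupling rule at each time $m+1$, given the walks agree up to time $m$ with $S^1_m \le S^2_m$: if $S^1_m < S^2_m$, then since the gap is at least $2$, let both walks move independently (or by any rule consistent with $\zz{P}^{(\beta,n)}$ — e.g. drive each by its own uniform); after one step the gap is still $\ge 0$ because it can decrease by at most $2$. Here one must be slightly careful when $S^1_m = 0$: then $S^1_{m+1} = 1$ deterministically, while $S^2_m \ge 2$ so $S^2_{m+1} \ge 1$, and the inequality is preserved. If instead $S^1_m = S^2_m =: x$, then couple the two steps to be \emph{identical}: if $x > 0$, use a single uniform $U \sim \mathrm{Unif}(0,1)$ and move both walks left if $U \le 1/2 + \beta/\sqrt{n}$ and right otherwise, so $S^1_{m+1} = S^2_{m+1}$; if $x = 0$, both walks are forced to $1$. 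In every case $S^1_{m+1} \le S^2_{m+1}$, so by induction the domination holds for all $m$, and by construction each of $\{S^1_m\}$ and $\{S^2_m\}$ has the correct marginal law $\zz{P}^{(\beta,n)}$. The odd-initial-position variant $S^1_0 = 2i_1+1$, $S^2_0 = 2i_2+1$ is handled by the identical argument: the gap is again even, and the only place parity mattered — ruling out a gap of size $1$ and the interaction with the reflecting boundary — goes through verbatim.

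The step I expect to require the most care is the boundary interaction: once the two walks have met ($S^1_m = S^2_m$) they are coupled to move together forever, so the boundary is never an issue thereafter; the delicate regime is while $S^1_m < S^2_m$, where the lower walk $S^1$ may hit $0$ and be reflected up to $1$. One must check this reflection cannot cause an overshoot, i.e. cannot make $S^1_{m+1} > S^2_{m+1}$. This is exactly where evenness of the gap is essential: if $S^1_m = 0$ then $S^2_m \ge 2$ (it cannot equal $1$), so even in the worst case $S^1_{m+1} = 1$ and $S^2_{m+1} \ge 1$, and the order is maintained — in fact the gap can close to $0$ but not flip sign. I would state this as the crux of the induction. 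Everything else is the routine verification that the prescribed transitions reproduce $\zz{P}^{(\beta,n)}$ for each walk separately, which is immediate since whenever the walks are apart each is driven by its own independent uniform with the correct thresholds, and whenever they coincide the shared uniform still has the correct thresholds.
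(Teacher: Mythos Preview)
Your proof is correct and follows essentially the same approach as the paper's: both arguments hinge on the observation that the gap $S^2_m - S^1_m$ stays a nonnegative even integer throughout, so the walks cannot cross and, once they meet, merge forever. The only cosmetic difference is the coupling rule while the walks are still apart---the paper lets $S^2$ take the \emph{same} increment as $S^1$ (so the gap is constant away from the boundary and shrinks only when $S^1$ is reflected at $0$), whereas you let the two walks move independently; either choice preserves the even gap and the ordering.
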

\begin{proof}
We shall only prove for the case where $S^1_0=2i_1,  S^2_0=2i_2$
and $i_1<i_2$. We will build $\{S_m^2\}$ step by step: if
$S_m^1>0$, then $S_{m+1}^2$ moves in the same direction away from
$S_{m}^2$ as $S_{m+1}^1$ does, i.e.,
\[
   S_{m+1}^2=S_{m}^2 + (S_{m+1}^1 - S_{m}^1);
\]
otherwise if $S_m^1=0$, then choose $S_{m+1}^2$ according to
distribution \eqref{eq:Pbeta}.  Since $S_0^2-S_0^1=2(i_2-i_1)$ is
even and at each step the difference between the jumps is either 0
or 2, the two random walks cannot cross each other and will either
never meet, or merge after they meet. The dominance
\eqref{eq:RW_dominance} follows.
\end{proof}


We now look more closely at the random walks $\{S_m\}\sim
\zz{P}=\zz{P}^{(\beta,n)}$. Based on the results in \cite{Kac47}
we show the following.
\begin{prop}\label{prop:RW_conv_rate} For any fixed $\beta>0$, $a\geq 0$,
and any nonnegative integer sequences $\{s_n\}$, $\{m_n\}$  with
$s_n=O(\sqrt{n})$ and $\lim_n m_n/(n(\log n)^2)>0$, the random
walks $\{S_m^{(n)}\,|\,S_0^{(n)}=s_n\,\}\sim \zz{P}^{(\beta,n)}$
satisfy
\begin{equation}\label{eq:conv_normal_range}
  \lim_{n\to\infty} P\left(S_{m_n}^{(n)}\geq a\sqrt{n}\,|\,S_0^{(n)}=s_n\right) =
  \exp(-4\beta  a),
\end{equation}
and
\begin{equation}\label{eq:conv_outer}
   \lim_{n\to\infty}\frac{P\left(S_{m_n}^{(n)}\geq a\sqrt{n}\log n\,|\,S_0^{(n)}=s_n\right)}{n^{-4\beta a}} = 1.
\end{equation}
\end{prop}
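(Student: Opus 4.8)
The plan is to reduce both statements to the known invariance principle for the reflected-with-drift random walk from \cite{Kac47}. Write $\beta_n = \beta/\sqrt{n}$ for brevity. The natural scaling is spatial $\sqrt{n}$ and temporal $n$; under this scaling the walk $\{S_m^{(n)}\}$ defined by \eqref{eq:Pbeta}, started at $s_n = O(\sqrt{n})$, converges (as a process) to a reflected Brownian motion on $[0,\infty)$ with constant drift $-2\beta$ towards the origin. The key point from \cite{Kac47} is that this limiting diffusion has stationary distribution $\pi$, the exponential distribution with $\pi((a,\infty)) = \exp(-4\beta a)$, and — crucially — it relaxes to stationarity. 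So the first step is: fix $t_n := m_n/n$, and note that since $\lim_n m_n/(n(\log n)^2) > 0$ we have $t_n \to \infty$. I would invoke the convergence to the reflected-with-drift diffusion together with its ergodicity to conclude that, uniformly over starting points in a bounded region of the $\sqrt n$-scale, $P(S_{m_n}^{(n)} \ge a\sqrt n \mid S_0^{(n)}=s_n) \to \pi((a,\infty)) = \exp(-4\beta a)$. This gives \eqref{eq:conv_normal_range}. The only care needed here is the uniformity in $s_n$: because $s_n = O(\sqrt n)$, the rescaled starting point $s_n/\sqrt n$ lives in a bounded set, and one can use the stochastic comparison Lemma \ref{lemma:comparison} (together with Lemma \ref{lemma:coupling} for the boundary) to sandwich the walk started at $s_n$ between walks started at $0$ and at $2\lceil C\sqrt n/2\rceil$, both of which have the same limit $\pi((a,\infty))$ since the diffusion forgets its (bounded-scale) initial condition as $t_n\to\infty$.

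For \eqref{eq:conv_outer} the target is at height $a\sqrt n\log n$, which is at the \emph{wrong scale} — it is much larger than the diffusive scale $\sqrt n$ — so the process-level convergence is useless and one needs a genuine large-deviation / exact-asymptotic computation. The heuristic is that the stationary density near level $x$ behaves like $4\beta_n (1-4\beta_n+o(\beta_n))^{x}$-ish; more precisely, for the walk with up-probability $\tfrac12-\beta_n$ and down-probability $\tfrac12+\beta_n$, the ratio test gives a geometric stationary tail with ratio $\rho_n := (\tfrac12-\beta_n)/(\tfrac12+\beta_n) = 1 - 4\beta_n + O(\beta_n^2)$, so $\pi_n((a\sqrt n\log n,\infty)) \asymp \rho_n^{a\sqrt n\log n} = \exp\bigl(a\sqrt n\log n \cdot \log\rho_n\bigr)$. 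Since $\log\rho_n = -4\beta/\sqrt n + O(1/n)$, the exponent is $-4\beta a\log n + O((\log n)/\sqrt n) \to$ gives exactly $n^{-4\beta a}$ to leading order. So the plan is: (i) identify the stationary measure $\pi_n$ of the chain \eqref{eq:Pbeta} explicitly (it is geometric on $\zz Z_+$ with the ratio $\rho_n$ above, suitably modified at $0$), (ii) show $P(S_{m_n}^{(n)} \ge a\sqrt n\log n\mid S_0=s_n) \sim \pi_n([a\sqrt n\log n,\infty))$ with the right multiplicative constant $1$, and (iii) compute $\pi_n([a\sqrt n\log n,\infty)) \sim n^{-4\beta a}$ by the geometric-sum estimate just sketched (taking logs, using $\log(1-4\beta_n+O(\beta_n^2)) = -4\beta_n + O(\beta_n^2)$, and $\sqrt n\log n \cdot O(1/n) \to 0$).

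The main obstacle is step (ii): controlling the rate of convergence to stationarity well enough to get the \emph{sharp} ratio $1$ in \eqref{eq:conv_outer}, not merely the right exponential order. For \eqref{eq:conv_normal_range} a soft ergodicity argument suffices, but for \eqref{eq:conv_outer} one must rule out that the transient part of the distribution contributes at the same order $n^{-4\beta a}$ as the stationary tail. The cleanest route is probably a coupling/mixing bound: the chain \eqref{eq:Pbeta} on $\zz Z_+$ with the contracting drift mixes in time $O(n)$ (this is essentially contained in the spectral-gap estimates behind \cite{Kac47}, or can be derived from Lemmas \ref{lemma:coupling}--\ref{lemma:comparison} plus the reflection estimate \eqref{eqn:exp_decay}), and since $m_n \gg n(\log n)^2$ the total variation distance from $\pi_n$ at time $m_n$ is $o(\pi_n([a\sqrt n\log n,\infty)))$ — indeed it decays like $\exp(-c\, m_n/n)$ which, with $m_n/(n(\log n)^2)\to$ something positive, beats any fixed power $n^{-4\beta a}$. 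I would spend the bulk of the proof making this total-variation-versus-tail comparison rigorous, using the stochastic monotonicity lemmas to reduce to a one-dimensional coupling time estimate and the Gaussian tail bound \eqref{eqn:exp_decay} to control that coupling time. Everything else — the explicit form of $\pi_n$, the geometric-sum asymptotics, the boundedness of the starting point — is routine.
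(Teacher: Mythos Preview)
Your proposal is correct but takes a genuinely different route from the paper's. The paper does not use an invariance principle or a coupling/mixing-time argument at all; instead it invokes an explicit eigenfunction expansion from \cite{Kac47} (equation~(41) there), which writes the transition probability as
\[
   P(S_{m_n}^{(n)}=k\mid S_0^{(n)}=s_n) \;=\; p^*_{m_n}(k) + R_{m_n}(k),
\]
where $p^*_{m_n}(k)=\frac{p-q}{2pq}(q/p)^k\bigl(1+(-1)^{s_n+k+m_n}\bigr)$ is exactly the parity-restricted stationary (geometric) measure you identified, and $R_{m_n}(k)$ is an oscillatory integral carrying a factor $(2\sqrt{pq})^{m_n}\le \exp(-2\beta^2 m_n/n)$. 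Summing the main term gives both \eqref{eq:conv_normal_range} and \eqref{eq:conv_outer} by the same geometric-tail computation you sketched; the remainder is disposed of by the crude bound $|R_{m_n}(k)|\le C\exp(-2\beta^2 m_n/n)\cdot e^{-k\beta/\sqrt{n}}(1+2k\beta)$, whose sum over $k$ is $O(\sqrt{n}\,e^{-c(\log n)^2})=o(n^{-4\beta a})$ under the hypothesis on $m_n$. So the paper's argument is purely analytic and treats both limits uniformly, with no separate invariance-principle step for \eqref{eq:conv_normal_range}.

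What each approach buys: the paper's route is very short once Kac's formula is on the table, and gives the spectral-gap constant $2\beta^2$ explicitly. Your coupling route is more self-contained and robust (it would survive perturbations of the step distribution for which no closed-form eigenfunction expansion exists), but you must actually prove the $O(n)$ mixing-time bound rather than cite it, and your separate invariance-principle treatment of \eqref{eq:conv_normal_range} is an unnecessary detour --- the same mixing-to-stationarity argument handles both limits at once. One small caveat to watch: the chain \eqref{eq:Pbeta} has period~$2$, so the total-variation distance to $\pi_n$ does \emph{not} tend to zero; you must phrase step~(ii) as convergence to the parity-restricted stationary measure (the paper's $(1+(-1)^{s_n+k+m_n})$ factor), else the argument is literally false.
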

\begin{proof}
When $a=0$, \eqref{eq:conv_normal_range} and \eqref{eq:conv_outer}
clearly hold. So below we assume that $a>0$.

Let
\[
   q=q^{(n)} =
   \frac{1}{2}-\frac{\beta}{\sqrt{n}},\quad\mbox{and}\quad
   p=p^{(n)} =
   \frac{1}{2}+\frac{\beta}{\sqrt{n}}.
\]
 By (41) in \cite{Kac47}, for any $k>0$,
\begin{equation}\label{eq:biased_RW_TP}
\aligned
  &P(S_{m_n}^{(n)} = k\,|\,S_0^{(n)}=s_n)\\
  &=\frac{p-q}{2pq}\left(\frac{q}{p}\right)^k\left(1+(-1)^{s_n+k+m_n}\right)\\
  & + \frac{2}{\pi}\left(\frac{p}{q}\right)^{s_n/2}\left(\frac{q}{p}\right)^{k/2}
  \left(2\sqrt{pq}\right)^{m_n}\int_0^\pi \cos^{m_n}\theta
  \frac{\tan^2\theta}{(p-q)^2 +
  \tan^2\theta}f_{s_n}(\theta)f_k(\theta)\,d\theta\\
  &:=p^*_{m_n}(k)+R_{m_n}(k),
\endaligned
\end{equation}
where for any $i\geq 1$,
\[
   f_i(\theta) = \cos i\theta - 2\frac{\beta}{\sqrt{n}}\frac{\sin
   i\theta}{\sin \theta},\quad\theta\in[0,\pi].
\]
We first estimate the main term $p^*_{m_n}(k)$. Depending on
whether $s_n + m_n$ is even or odd, $S_{m_n}^{(n)}$ only takes
even or odd values. We shall only deal with the case when $s_n +
m_n$ is even. In this case,
\[
   \sum_{k\geq a\sqrt{n}}p^*_{m_n}(k)
   =2\frac{p-q}{2pq} \sum_{k\geq a\sqrt{n},\ k\mbox{ even}}
   \left(\frac{q}{p}\right)^k.
\]
Using the sum formula for geometric series and noting that
\[
  \frac{q}{p}=\frac{\frac{1}{2} - \frac{\beta}{\sqrt{n}}}{\frac{1}{2} +
  \frac{\beta}{\sqrt{n}}} \sim 1-\frac{4\beta}{\sqrt{n}},
\]
one can easily  show that
\begin{equation}\label{eq:conv_normal_range_mt}
  \lim_{n\to\infty}\sum_{k\geq a\sqrt{n}} p^*_{m_n}(k)=
  \exp(-4\beta  a).
\end{equation}
Similarly,
\begin{equation}\label{eq:conv_outer_mt}
  \lim_{n\to\infty}\frac{\sum_{k\geq a\sqrt{n}\log n}  p^*_{m_n}(k)}{n^{-4\beta a}} = 1.
\end{equation}
It remains to show that the remainder terms $R_{m_n}(k)$ decay
rapidly as $n\to\infty$. In fact, by the simple bound
\[
  |\sin i\theta |\leq i\sin \theta,\quad\mbox{for all }\theta \in
  [0,\pi],
\]
we get
\[
  |f_i(\theta)|\leq 1+\frac{2\beta }{\sqrt{n}} i.
\]
Hence, since $s_n=O(\sqrt{n})$,
\[
\aligned
   |R_{m_n}(k)|
   &\leq C\left(2\sqrt{pq}\right)^{m_n}\cdot \left(\frac{q}{p}\right)^{k/2} (1+2k\beta)\\
   &\leq C\exp(-2\beta^2 m_n/n)\cdot
   \exp(-k\beta/\sqrt{n})(1+2k\beta).
\endaligned
\]
As
\[
   \sum_{k=1}^\infty \exp(-k\beta/\sqrt{n})(1+2k\beta) =
   O(\sqrt{n}),
\]
and $\lim_n m_n/(n(\log n)^2)>0$, \eqref{eq:conv_normal_range} and
\eqref{eq:conv_outer} follow from \eqref{eq:conv_normal_range_mt}
and \eqref{eq:conv_outer_mt}.
\end{proof}

\section{Proof of Theorem \ref{thm:range_super}}
We first recall some well known facts about critical Galton-Watson
processes. Let $\sigma^2<~\infty$ be the variance of  the
offspring distribution $\mathcal{Q}$. Then, if $Z_m$ is the number
of particles at time~$m$ with $Z_0=1$,  and $G_{m} = \{Z_m
> 0\}$ is the event that the Galton-Watson
process survives to generation~$m$, then
\begin{align}\label{eq:var_Z_m}
   \var(Z_m) &= m\sigma^2,\\
  \label{eq:survivalToGenM}
    \rho_m:&=  P (G_{m}) \sim \frac{2}{m\sigma^{2}},\\
   \label{eq:exp_surv}
   E(Z_m\,|\,G_m)&=\frac{1}{\rho_m}\sim \frac{\sigma^2m}{2},\;\text{and}\\
   \label{eqn:lim_exp}
   \mathcal{L}\left(\left.\frac{Z_m}{m}\right| G_m\right)
   &\Longrightarrow {\rm Exp}(\sigma^2/2),
   \quad{\rm as }\quad m\to\infty,
\end{align}
see, e.g., sections I.2 and  I.9 of \cite{athreya72}. Relation
\eqref{eq:survivalToGenM} is known as Kolmogorov's estimate;
\eqref{eqn:lim_exp} is Yaglom's theorem.

We will decompose the proof of Theorem \ref{thm:range_super} into
two steps. In Proposition \ref{prop:lower_bd} we show that for any
$\vep>0$, $(\alpha-1-\vep)\sqrt{n}\log n /(4\beta)$ is an
asymptotic lower bound for $R^{(n)}_{[n^\alpha]}$. Proposition
\ref{prop:upper_bd} says that $(\alpha-1+\vep)\sqrt{n}\log n
/(4\beta)$ is an asymptotic upper bound.
Theorem~\ref{thm:range_super} follows by combining these two
propositions.

To prove Theorem \ref{thm:range_super}, we will follow the
strategy used to prove Theorems 4 and 5 in \cite{lz07}, namely,
changing the conditional event $G_k$ to some event defined with
respect to a generation $m(k)\leq k$.

The following two lemmas are Lemmas 17 and 18 in \cite{lz07}.
\begin{lemma}\label{lemma:conditioning} Suppose that on some probability
space $(\Omega, \mathcal{F}, P)$ there are two events $E_1, E_2$
with $P(E_1)P(E_2)>0$ such that
\begin{equation}\label{eqn:sym_diff}
    \frac{P(E_1\Delta E_2)}{P(E_{1})}\leq \varepsilon,
\end{equation}
where $E_1\Delta E_2$ is the symmetric difference of $E_1$ and
$E_2$. Then
\begin{equation}\label{eqn:tv_diff}
   ||P(\cdot|E_1) - P(\cdot|E_2)||_{TV}
   \leq 2\varepsilon,
\end{equation}
where $P(\cdot|E_i)$ denotes the conditional probability measure
given the event $E_{i}$, and $||\cdot||_{TV}$ denotes the total
variation distance.
\end{lemma}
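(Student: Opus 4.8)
The plan is to work straight from the definition
$||P(\cdot|E_1)-P(\cdot|E_2)||_{TV}=\sup_A|P(A|E_1)-P(A|E_2)|$ and to bound this difference for a fixed but arbitrary event $A$, controlling separately the change in the numerators $P(A\cap E_i)$ and the change in the normalizing denominators $P(E_i)$.

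First I would extract two elementary bounds from \eqref{eqn:sym_diff}. Put $\delta:=P(E_1\Delta E_2)$. For every event $A$ the sets $A\cap E_1$ and $A\cap E_2$ differ only within $E_1\Delta E_2$, so $|P(A\cap E_1)-P(A\cap E_2)|\le\delta$; applying this with $A=\Omega$ gives also $|P(E_1)-P(E_2)|\le\delta$. Next, for a fixed $A$ I would use the algebraic identity
\[
  P(A|E_1)-P(A|E_2)
  =\frac{P(A\cap E_1)-P(A\cap E_2)}{P(E_1)}
  +P(A\cap E_2)\left(\frac{1}{P(E_1)}-\frac{1}{P(E_2)}\right),
\]
which isolates the contribution of the two denominators. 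The first term has absolute value at most $\delta/P(E_1)$ by the numerator bound. For the second term, $P(A\cap E_2)\le P(E_2)$ and
\[
  \left|\frac{1}{P(E_1)}-\frac{1}{P(E_2)}\right|
  =\frac{|P(E_1)-P(E_2)|}{P(E_1)P(E_2)}\le\frac{\delta}{P(E_1)P(E_2)},
\]
so the second term is also at most $\delta/P(E_1)$. Hence $|P(A|E_1)-P(A|E_2)|\le 2\delta/P(E_1)\le 2\varepsilon$ by \eqref{eqn:sym_diff}, and taking the supremum over $A$ yields \eqref{eqn:tv_diff}.

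There is no genuinely hard step here; the only point requiring a little care is the asymmetry between the two conditional measures, and the reason \eqref{eqn:sym_diff} is stated with $P(E_1)$ (rather than $P(E_2)$) in the denominator is precisely to make the grouping above go through — one adds and subtracts $P(A\cap E_2)/P(E_1)$, not $P(A\cap E_1)/P(E_2)$. A fully symmetrized argument would give a marginally better constant than $2\varepsilon$, but the stated bound is all that is needed for the applications to the conditioning arguments in Section~3.
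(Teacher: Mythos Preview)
Your argument is correct. Note that the paper does not actually prove this lemma: it simply records it as Lemma~17 of \cite{lz07} and gives no argument. Your add-and-subtract decomposition, bounding the numerator change and the denominator change separately by $\delta/P(E_1)$, is exactly the kind of elementary computation one expects here, and every step checks out (in particular, $(A\cap E_1)\Delta(A\cap E_2)=A\cap(E_1\Delta E_2)$ justifies the numerator bound). There is nothing to add.
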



\begin{lemma}\label{lemma:timeJiggle}
Let $m(k)\leq k$ be integers and $\varepsilon_{k}>0$  be real
numbers such that $m(k)/k \rightarrow 1$ and $\varepsilon_{k}
\rightarrow 0$ as $k \rightarrow \infty$. Then
\begin{equation}\label{eq:noAsyDiff}
    \lim_{k \rightarrow \infty} \frac{P (G_{k}\Delta
    H_{k})}{P(G_{k})}=0,
\end{equation}
where
\[
   G(k) = \{Z_k > 0\}\quad\mbox{ and
   }\quad
   H(k) = \{Z_{m(k)} \geq
   k\vep_k\}.
\]
\end{lemma}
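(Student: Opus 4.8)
The plan is to exploit the inclusion $H_k \subseteq G_k$ up to an event of negligible probability and to control $G_k \setminus H_k$ using Kolmogorov's estimate together with Yaglom's theorem. First I would observe that $G_k \setminus H_k = \{Z_{m(k)} \geq 1\} \cap \{Z_{m(k)} < k\vep_k\}$ has probability at most $P(1 \le Z_{m(k)} < k\vep_k)$, while $H_k \setminus G_k = \{Z_{m(k)} \ge k\vep_k\} \cap \{Z_k = 0\}$: a process with at least $k\vep_k$ particles at time $m(k)$ must die out in the remaining $k - m(k)$ generations. By the branching property, conditional on $Z_{m(k)} = j$ the probability that the process is extinct at time $k$ is $(1-\rho_{k-m(k)})^j \le \exp(-j\rho_{k-m(k)})$. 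Since $m(k)/k \to 1$ forces $k - m(k) = o(k)$, hmm — wait, that makes $\rho_{k-m(k)}$ larger, which is good — actually one needs $\rho_{k-m(k)} = \Theta(1/(k-m(k)))$ and $j \ge k\vep_k$, so $j\rho_{k-m(k)} \gtrsim k\vep_k/(k-m(k))$; this diverges provided $k\vep_k/(k-m(k)) \to \infty$, which is guaranteed if $m(k)/k \to 1$ fast enough relative to $\vep_k$. I would note that in the application one has freedom to choose $m(k)$, so this is not a genuine constraint; in any case one bounds $P(H_k \setminus G_k) \le E[\exp(-Z_{m(k)}\rho_{k-m(k)}); Z_{m(k)} \ge k\vep_k]$ and shows this is $o(\rho_k)$.

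For the term $P(G_k \setminus H_k)$, the key is Yaglom's theorem \eqref{eqn:lim_exp}: conditional on $G_{m(k)}$, the rescaled population $Z_{m(k)}/m(k)$ converges in law to an $\mathrm{Exp}(\sigma^2/2)$ random variable, which has no atom at $0$. Hence
\[
   \frac{P(1 \le Z_{m(k)} < k\vep_k)}{P(G_{m(k)})}
   = P\!\left(\left. \frac{Z_{m(k)}}{m(k)} < \frac{k\vep_k}{m(k)} \,\right|\, G_{m(k)}\right)
   \longrightarrow 0,
\]
since $k\vep_k/m(k) \sim \vep_k \to 0$ and the limiting distribution puts mass $o(1)$ on $[0,\delta]$ as $\delta \to 0$. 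Combining this with $P(G_{m(k)}) \sim 2/(m(k)\sigma^2) \sim 2/(k\sigma^2) \sim P(G_k)$ from \eqref{eq:survivalToGenM} and $m(k)/k\to 1$, we get $P(G_k \setminus H_k)/P(G_k) \to 0$.

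Finally I would assemble the two bounds: $P(G_k \Delta H_k) = P(G_k \setminus H_k) + P(H_k \setminus G_k)$, and dividing by $P(G_k) \sim 2/(k\sigma^2)$ shows both pieces vanish, giving \eqref{eq:noAsyDiff}. The main obstacle is the $H_k \setminus G_k$ term — making precise the interplay between how fast $m(k)/k \to 1$ and how fast $\vep_k \to 0$, so that $k\vep_k \cdot \rho_{k-m(k)} \to \infty$; everything else is a direct application of Kolmogorov's estimate and Yaglom's theorem. Since this is quoted from \cite{lz07} as Lemma~18, one may alternatively simply cite that reference, but the argument above is the natural self-contained route.
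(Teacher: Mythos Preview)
The paper does not give its own proof of this lemma; it simply cites it as Lemma~18 of \cite{lz07}. Your self-contained sketch is the right idea, and the treatment of $G_k\setminus H_k$ via Yaglom's theorem together with $\rho_{m(k)}\sim\rho_k$ is exactly the natural argument.

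There is, however, a genuine gap in your handling of $H_k\setminus G_k$. You bound
\[
   P(H_k\setminus G_k)\le E\bigl[\exp(-Z_{m(k)}\rho_{k-m(k)});\,Z_{m(k)}\ge k\vep_k\bigr]
\]
and then need $k\vep_k\cdot\rho_{k-m(k)}\to\infty$ to make this $o(\rho_k)$. As you yourself note, this is \emph{not} guaranteed by the stated hypotheses: for instance $m(k)=k-\lfloor\sqrt{k}\rfloor$ and $\vep_k=1/k$ satisfy $m(k)/k\to 1$ and $\vep_k\to 0$, yet $k\vep_k/(k-m(k))\sim k^{-1/2}\to 0$, so your exponential bound gives nothing. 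Saying that ``in the application one has freedom to choose $m(k)$'' does not prove the lemma as stated.

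The fix is immediate and avoids any rate comparison: since $H_k\subseteq\{Z_{m(k)}>0\}=G_{m(k)}$ and $G_k\subseteq G_{m(k)}$, you have
\[
   P(H_k\setminus G_k)\le P(G_{m(k)}\setminus G_k)=\rho_{m(k)}-\rho_k,
\]
and Kolmogorov's estimate \eqref{eq:survivalToGenM} together with $m(k)/k\to 1$ gives $(\rho_{m(k)}-\rho_k)/\rho_k\to 0$ directly. With this replacement your argument is complete and requires no relation between the rates of $m(k)/k\to 1$ and $\vep_k\to 0$.
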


By Lemmas \ref{lemma:conditioning} and \ref{lemma:timeJiggle}, we
can change the conditioning event $G_k=\{Z_k>0\}$ to
$H_k=\{Z_{m(k)}\geq k\vep_k\}$, and it suffices to prove the
convergence in Theorem~\ref{thm:range_super} when the conditioning
event is $H_{k}$ rather than $G_{k}$. The advantage of this is
that, conditional on the state of the BRW at time $m (k)$, the
next $k-m (k)$ generations are gotten by running
\emph{independent} BRWs for time $k-m (k)$ starting from the
locations of the particles in generation $m (k)$.

We now show that $(\alpha-1-\vep)\sqrt{n}\log n /(4\beta)$ is an
asymptotic lower bound for $R^{(n)}_{[n^\alpha]}$.
\begin{prop}\label{prop:lower_bd}For any $\vep>0$,
$$
   \lim_{n\to\infty} P\left(\left.R^{(n)}_{[n^\alpha]}\geq \frac{\alpha-1-\vep}{4\beta}\cdot\sqrt{n}\log n
   \,\right|\,G^{(n)}_{[n^\alpha]}\right)=1.
$$
\end{prop}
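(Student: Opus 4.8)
The plan is to use the reduction afforded by Lemmas~\ref{lemma:conditioning} and \ref{lemma:timeJiggle} to replace the conditioning event $G^{(n)}_{[n^\alpha]}$ by $H^{(n)}_{[n^\alpha]}=\{Z^{(n)}_{m(n)}\geq [n^\alpha]\vep_n\}$ for a suitable choice $m(n)=[n^\alpha]-[n(\log n)^2]$ (say) and $\vep_n\to 0$ slowly, e.g.\ $\vep_n=1/\log n$. On the event $H^{(n)}_{[n^\alpha]}$ there are at least $[n^\alpha]\vep_n$ particles alive at generation $m(n)$, and conditionally on the configuration at that time the BRW evolves as a collection of independent BRWs over the remaining $k-m(n)\asymp n(\log n)^2$ generations. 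It therefore suffices to show that, starting from that many particles (whose locations we do not control but which, crucially, all lie in $\zz{Z}_+$, hence can be pushed down to the origin via the coupling Lemma~\ref{lemma:comparison}), with probability tending to $1$ at least one descendant reaches height $(\alpha-1-\vep)\sqrt n\log n/(4\beta)$ by generation $k$.

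The second and main ingredient is a first/second-moment (or independence) argument at the level of these independent sub-BRWs. Fix $a=(\alpha-1-\vep)/(4\beta)$, so we want a particle at or above $a\sqrt n\log n$. For a single BRW started by one particle at the origin, run for $\ell_n:=k-m(n)\asymp n(\log n)^2$ generations, let $N$ be the number of particles at time $\ell_n$ located at height $\geq a\sqrt n\log n$. By the fundamental identity~\eqref{eq:fundamental} and Proposition~\ref{prop:RW_conv_rate} (specifically \eqref{eq:conv_outer}, which applies since $\ell_n/(n(\log n)^2)$ has positive liminf and the starting point is $O(\sqrt n)$ — here $0$),
\[
   EN = \sum_{x\geq a\sqrt n\log n} EU_{\ell_n}(x)
      = P^0\!\left(S_{\ell_n}\geq a\sqrt n\log n\right)
      \sim n^{-4\beta a}=n^{-(\alpha-1-\vep)} .
\]
Hence a \emph{single} sub-BRW has probability $\gtrsim n^{-(\alpha-1-\vep)}$ of contributing such a particle (one needs $P(N\geq 1)\gtrsim EN$, not merely $\leq$; I will get this from a second-moment bound $EN^2\leq C\,EN$, proved by the usual ancestral-pair decomposition for critical BRW, splitting on the generation $j$ of the most recent common ancestor and using $\rho_j\sim 2/(j\sigma^2)$ together with the transition-probability bounds of Section~2 — the contribution is dominated by $j=\ell_n$, giving $EN^2\asymp EN$). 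With $\approx [n^\alpha]\vep_n\gg n^{\alpha-1-\vep}\log n$ independent sub-BRWs available, the probability that \emph{none} of them produces a particle above $a\sqrt n\log n$ is at most $(1-c\,n^{-(\alpha-1-\vep)})^{[n^\alpha]\vep_n}\to 0$, which is exactly the claim (after undoing the conditioning swap via the total-variation bound~\eqref{eqn:tv_diff}).

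The main obstacle I expect is the lower bound $P(N\geq 1)\geq c\,EN$ for the single sub-BRW, i.e.\ ruling out that the event "some particle is high" is carried by a vanishingly rare event on which $N$ is huge. This is handled by the second-moment method, but it requires care: $EN^2 = EN + \sum_{j}\bE[\#\text{pairs with MRCA at gen }j,\ \text{both above }a\sqrt n\log n]$, and for the ancestral pair one must bound $\rho_j\cdot P^0(S_j=z)\cdot P^z(S_{\ell_n-j}\geq a\sqrt n\log n)^2$ summed over $j\leq\ell_n$ and $z$. For $j$ close to $\ell_n$ the factor $n^{-8\beta a}$-type decay in the square is offset only over a short remaining time, but $\rho_j\asymp 1/\ell_n$ is small; a direct estimate using \eqref{eq:conv_outer} (and its analogue started from $z=O(\sqrt n\log n)$, which one obtains by the same Kac formula computation or by monotone coupling from the relevant even/odd lattice point) shows the sum is $O(EN)$, the dominant term being the diagonal $j=\ell_n$. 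A secondary technical point is that in the full BRW the $[n^\alpha]\vep_n$ particles at generation $m(n)$ sit at possibly large heights and come in a random configuration; but since heights only help (Lemma~\ref{lemma:comparison}, or simply monotonicity of the rightmost particle), one may legitimately lower-bound by pretending all of them start at $0$ (matching parity by shifting by at most one step), so no extra work is needed there.
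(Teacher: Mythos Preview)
Your reduction step (swapping $G^{(n)}_{[n^\alpha]}$ for $H^{(n)}_{[n^\alpha]}$ at time $m(n)=[n^\alpha]-n(\log n)^2$) matches the paper's. After that, the paper takes a different and shorter route: rather than running independent sub-BRWs and controlling $P(N\geq 1)$ by a second-moment bound, it first thins the $Z^{(n)}_{m(n)}$ particles to those whose families survive the remaining $nL(n)$ generations (a $\text{Bin}(Z^{(n)}_{m(n)},\rho_{nL(n)})$ count, concentrated by Chernoff around $cn^{\alpha-1}/(\log n)^4$), and for each surviving family picks \emph{one} descendant at time $[n^\alpha]$ uniformly at random. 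Because spatial motion and branching are independent, the trajectory of a uniformly chosen descendant is exactly a random walk with law $\zz{P}^{(\beta,n)}$, and these walks are independent across families. One then applies \eqref{eq:conv_outer} directly to each walk; the probability that none of the $\gtrsim n^{\alpha-1}/(\log n)^4$ independent walks ends above $a\sqrt n\log n$ is at most $(1-cn^{-(\alpha-1-\vep)})^{cn^{\alpha-1}/(\log n)^4}\to 0$. No second-moment computation is needed.

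Your second-moment route also works, but the specific claim $EN^2\asymp EN$ is wrong for $\alpha$ near $1$. The many-to-two formula for a critical BRW reads
\[
   E[N(N-1)]=\sigma^2\sum_{j=0}^{\ell_n-1}E^0\bigl[(P^{S_j}(S'_{\ell_n-j}\geq h))^2\bigr]
\]
(no factor $\rho_j$), and already the terms with $\ell_n-j\geq \ell_n/2$ contribute on the order of $\ell_n(EN)^2$, which dominates $EN$ whenever $\alpha-1-\vep<1$. What \emph{is} true---trivially, bounding one of the two squared factors by $1$ and using the Chapman--Kolmogorov identity---is $E[N(N-1)]\leq \sigma^2\ell_n\,EN$, whence $P(N\geq 1)\geq c\,EN/\ell_n\asymp n^{-(\alpha-\vep)}/(\log n)^2$. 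Multiplying by the $[n^\alpha]\vep_n\asymp n^\alpha/\log n$ independent sub-BRWs still gives $n^\vep/(\log n)^3\to\infty$, so your argument closes with this easier (and correct) bound. The paper's ``pick one descendant'' trick is precisely what recovers the missing factor $\ell_n$ and makes the counting cleaner.
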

\begin{proof}
By Lemmas \ref{lemma:conditioning} and \ref{lemma:timeJiggle}, we
can change the conditioning event from $G^{(n)}_{[n^\alpha]}$ to
$\{Z^{(n)}_{[n^\alpha]-nL(n)}>[n^\alpha /L(n)]\}$ for
$L(n):=[(\log n)^2]$, where for any $k\geq 0$, $Z^{(n)}_k$ is the
number of particles at generation $k$ for the $n$th BRW $U^{(n)}$.
Conditioning on $\{Z^{(n)}_{[n^\alpha]-nL(n)}>[n^\alpha /L(n)]\}$,
there will be at least $X\sim \text{Bin}([n^\alpha /L(n)],
\rho_{nL(n)})$ number of particles at time $[n^\alpha]-nL(n)$
whose families will survive to time $[n^\alpha]$. For any such
particle, among its descendants at time $[n^\alpha]$ we uniformly
pick one, then the trajectory of the chosen particle from time
$[n^\alpha]-nL(n)$ to $[n^\alpha]$ will be a random walk following
the law $\zz{P}^{\beta,n}$, starting at the location of its
ancestor at time $[n^\alpha]-nL(n)$. In this way we get at least
$\text{Bin}([n^\alpha /L(n)], \rho_{nL(n)})$ number of independent
random walks. We would like to show the probability that the
maximum of the end positions of these random walks is bigger than
$(\alpha-1-\vep)\sqrt{n}\log n /(4\beta)$ is asymptotically~1. By
Lemma \ref{lemma:comparison}, this probability is not increased if
we assume that all these random walks are started at 0 or 1,
depending on whether $[n^\alpha]-nL(n)$ is even or odd. But since
the random walks have $nL(n)$ steps to go, by relation
\eqref{eq:conv_outer}, no matter whether the starting point is 0
or 1, for large $n$, the probability that each random walk is to
the right of $(\alpha-1-\vep)/(4\beta)\cdot \sqrt{n}\log n $ is
asymptotically $n^{-(\alpha-1-\vep)}$. However we have at least
$X\sim\text{Bin}([n^\alpha /L(n)], \rho_{nL(n)})$ number of
\hbox{i.i.d.} trials, and by relation(\ref{eq:survivalToGenM}) and
Chernoff bound (\cite{Chernoff52} or \cite{AV79}), for all $n$
sufficiently large,
\begin{equation}\label{eq:Chernoff}
  P\left(X\leq \frac{1}{2}\cdot \frac{n^{\alpha}}{L(n)}\frac{2}{nL(n)\sigma^2}\right)
  \leq \exp\left(- \frac{n^{\alpha}}{L(n)}\frac{2}{nL(n)\sigma^2}\cdot \frac{1}{9}\right)
  \to 0.
\end{equation}
It follows that the probability for the maximum of the end
positions of these random walks to be bigger than
$(\alpha-1-\vep)/(4\beta)\cdot \sqrt{n}\log n $ is asymptotically
1.
\end{proof}

Proposition \ref{prop:lower_bd} gives the desired lower bound. We
now prove the upper bound.
\begin{prop}\label{prop:upper_bd}For any $\vep>0$,
$$
   \lim_{n\to\infty} P\left(\left.R^{(n)}_{[n^\alpha]}\leq \frac{\alpha-1+\vep}{4\beta}\cdot\sqrt{n}\log n
   \,\right|G^{(n)}_{[n^\alpha]}\right)=1.
$$
\end{prop}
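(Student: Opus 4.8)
The plan is to bound the probability that \emph{some} particle in generation $[n^\alpha]$ exceeds the threshold $r_n := \frac{\alpha-1+\vep}{4\beta}\sqrt{n}\log n$ by a first-moment computation, combined with the fact that conditioning on $G^{(n)}_{[n^\alpha]}$ only inflates probabilities by the factor $1/\rho_{[n^\alpha]} \asymp n^\alpha$. Write $k = [n^\alpha]$. Since $U^{(n)}$ starts from one particle at $0$, by the fundamental identity \eqref{eq:fundamental} the expected number of particles at sites $\geq r_n$ in generation $k$ equals $P^0(S_k \geq r_n)$, where $S_m \sim \zz{P}^{(\beta,n)}$. By Lemma \ref{lemma:coupling} this is at most $P^0(\wt S_k \geq r_n)$, and then by Lemma \ref{lemma:sy_srw} (applied with $x=0$, noting the reflected walk and the bound \eqref{eqn:exp_decay}) this is at most $C\exp(-b r_n^2 / k)$. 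With $r_n \asymp \sqrt n \log n$ and $k \asymp n^\alpha$, the exponent is of order $-(\log n)^2 / n^{\alpha-1}$, which does not beat the $n^\alpha$ loss from conditioning when $\alpha > 1$ — so a naive single first-moment bound over the whole time interval is \emph{too weak}, and a smarter truncation is required.

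To fix this I would use the same time-jiggling device as in Proposition \ref{prop:lower_bd}: by Lemmas \ref{lemma:conditioning} and \ref{lemma:timeJiggle}, replace the conditioning event $G^{(n)}_{[n^\alpha]}$ by $H_n := \{Z^{(n)}_{[n^\alpha]-nL(n)} \geq [n^\alpha/L(n)]\}$ with $L(n) = [(\log n)^2]$, at the cost of a vanishing total-variation error. On $H_n$, the configuration at time $m_n := [n^\alpha]-nL(n)$ has at most $Z^{(n)}_{m_n}$ particles, and the probability of $H_n$ is bounded below by $\rho_{[n^\alpha]} \asymp n^{-\alpha}$ (up to constants). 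Then I estimate
\[
  P\!\left(R^{(n)}_{[n^\alpha]} > r_n \,\middle|\, H_n\right)
  \leq \frac{1}{P(H_n)}\, E\!\left[\indic_{H_n}\, U^{(n)}_{[n^\alpha]}\big([r_n,\infty)\big)\right].
\]
Conditioning on the configuration at time $m_n$ and running the next $nL(n) \asymp n(\log n)^2$ generations as independent BRWs started from the particle locations at time $m_n$, \eqref{eq:fundamental} gives that the conditional expected number of particles beyond $r_n$ at time $[n^\alpha]$ is $\sum_{x} U^{(n)}_{m_n}(x)\, P^x(S_{nL(n)} \geq r_n)$. Using Lemma \ref{lemma:coupling} and Lemma \ref{lemma:sy_srw} again, $P^x(S_{nL(n)} \geq r_n) \leq C\exp(-b(r_n - x)_+^2/(nL(n)))$; I would control the contribution of large starting positions $x$ by noting that on the lower-bound side we already know (from standard BRW bounds, or just from crudely bounding $U^{(n)}_{m_n}(\cdot)$ by the expected total mass $\asymp n^\alpha$ on $G^{(n)}$) that essentially all mass at time $m_n$ sits in $[0, O(\sqrt n \log n)]$. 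For $x = o(r_n)$ the exponent is $-b r_n^2(1+o(1))/(n(\log n)^2) \sim -b'(\alpha-1+\vep)^2 (\log n)^2 / (16\beta^2 (\log n)^2)$, which is a negative constant times $(\log n)^2$ — wait, more carefully: $r_n^2/(n(\log n)^2) \asymp (\log n)^2/(\log n)^2 = $ constant, so this only gives a fixed power $n^{-\delta}$, still not enough. The resolution is that one must in fact take the intermediate generation \emph{much} closer to $[n^\alpha]$ — of order $n$ rather than $n(\log n)^2$ steps to the final time — or iterate the argument over a geometric sequence of time scales so that at the last, shortest scale of $O(n)$ steps the Gaussian bound $\exp(-b r_n^2/n) = \exp(-b''(\log n)^2) = n^{-b''\log n}$ crushes the polynomial factors.

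Concretely, I would split $[m_n', [n^\alpha]]$ with $m_n' := [n^\alpha] - [n^{1+\vep'}]$ for small $\vep' > 0$: the event of surviving to generation $[n^\alpha]$ is (by Lemma \ref{lemma:timeJiggle}) within $o(\rho_{[n^\alpha]})$ of having $\geq n^{\alpha-1-\vep'}$ particles at time $m_n'$; there, the expected number of particles is $O(n^\alpha)$; each such particle, starting from $x = O(\sqrt n \log n)$, reaches beyond $r_n$ in the remaining $[n^{1+\vep'}]$ steps with probability at most $C\exp(-b r_n^2/n^{1+\vep'}) = C\exp(-b'(\log n)^2 n^{-\vep'})$, which for the relevant range is still only stretched-exponentially small in a power of $\log n$ — so the clean choice is the borderline scale $n^{1+o(1)}$ tuned so that $r_n^2/(\text{time}) \to \infty$ like a constant times $(\log n)^2$ divided by a slowly growing function, say time $= n\log n$: then $r_n^2/(n\log n) \asymp \log n$, giving $\exp(-c\log n\cdot\log n)$... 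The upshot: choose the gap to the final generation to be $[n(\log n)^{1/2}]$ or similar, whereby $r_n^2/\text{gap} \gg \log(n^\alpha)$ while the gap is still $o(n^\alpha)$ so the time-jiggle lemma applies; a first-moment bound then yields the claim. The main obstacle is exactly this balancing act — choosing the intermediate generation so that the Gaussian tail bound from Lemma \ref{lemma:sy_srw} decays faster than $n^\alpha$ (to absorb the conditioning cost) while the ratio to $[n^\alpha]$ still tends to $1$ (so Lemma \ref{lemma:timeJiggle} is applicable) — and, secondarily, justifying rigorously that at that intermediate time the particles are confined to $[0, O(\sqrt n\log n)]$, which can be bootstrapped from the lower bound in Proposition \ref{prop:lower_bd} applied at a slightly earlier generation together with a union bound.
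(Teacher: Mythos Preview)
Your proposal has the right overall shape (time-jiggle, then a first-moment bound over the remaining gap), but it contains a genuine gap: you never correctly dispose of the particles that are \emph{already} far to the right at the intermediate generation. You flag this as a ``secondary'' obstacle and propose to bootstrap from Proposition~\ref{prop:lower_bd}, but that proposition is a \emph{lower} bound on $R^{(n)}$ and gives no confinement of mass whatsoever, so it cannot be used here. Moreover, a pure first-moment bound cannot work: if you compute $E\bigl[U^{(n)}_{[n^\alpha]}([r_n,\infty))\,\bigm|\,G^{(n)}_{[n^\alpha]-\text{gap}}\bigr]$ directly, you get (by \eqref{eq:fundamental} and \eqref{eq:exp_surv}) $P^0(S_{[n^\alpha]}\geq r_n)/\rho_{[n^\alpha]-\text{gap}}\sim n^{-(\alpha-1+\vep)}\cdot \tfrac{\sigma^2}{2}n^\alpha = \tfrac{\sigma^2}{2}n^{1-\vep}$, which diverges for $\vep<1$ regardless of how you choose the gap. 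So the ``balancing act'' you identify as the main obstacle is a red herring.

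The paper's proof takes gap $=n$ (the simplest choice, which your search never lands on), shifts the conditioning to $G^{(n)}_{[n^\alpha]-n}$ via a double application of Lemmas~\ref{lemma:conditioning} and~\ref{lemma:timeJiggle}, and then introduces an \emph{intermediate threshold} $r_n':=\frac{\alpha-1+\vep/2}{4\beta}\sqrt n\log n$. Particles beyond $r_n'$ at time $[n^\alpha]-n$ are handled not by a Gaussian tail but by combining the exact random-walk tail \eqref{eq:conv_outer} with the \emph{survival} probability $\rho_n\sim 2/(n\sigma^2)$ for the remaining $n$ generations: their conditional expected number is $\sim\tfrac{\sigma^2}{2}n^\alpha\cdot n^{-(\alpha-1+\vep/2)}=\tfrac{\sigma^2}{2}n^{1-\vep/2}$, and the expected number among them whose families survive to time $[n^\alpha]$ is $\sim n^{1-\vep/2}\cdot \tfrac{2}{n\sigma^2}=n^{-\vep/2}\to 0$. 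Only then does the Gaussian bound of Lemma~\ref{lemma:sy_srw} enter, applied to particles at positions $\leq r_n'$, which must travel at least $r_n-r_n'=\tfrac{\vep}{8\beta}\sqrt n\log n$ in $n$ steps; this gives $\exp(-c(\log n)^2)$, crushing the polynomial factor $E(Z^{(n)}_{[n^\alpha]-n}\,|\,G^{(n)}_{[n^\alpha]-n})=O(n^\alpha)$. The two missing ingredients in your proposal are thus the intermediate threshold $r_n'$ and the use of the extra survival factor $\rho_n$ to kill the far-right contribution.
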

\begin{proof}
For any $\vep_n\to 0$, define
$H^{(n)}_{[n^\alpha]}=\{Z^{(n)}_{[n^\alpha]-n}\geq
([n^\alpha]-n)\cdot \vep_n \}$.  Applying Lemmas
\ref{lemma:conditioning} and \ref{lemma:timeJiggle} once we see
that we can change the conditioning event from
$G^{(n)}_{[n^\alpha]}$ to $H^{(n)}_{[n^\alpha]}$; applying these
lemmas again  we see that we can change the conditioning event to
$G^{(n)}_{[n^\alpha]-n}$. Since $\alpha>1$, by relation
\eqref{eq:conv_outer}, the probability that each random walk is to
the right of $(\alpha-1+\vep/2) /(4\beta)\cdot\sqrt{n}\log n$ at
time $[n^\alpha]-n$ is asymptotically $n^{-(\alpha-1+\vep/2)}$.
Thus, using relations \eqref{eq:fundamental} and
\eqref{eq:exp_surv}, the conditional expectation of the number of
particles to the right of $(\alpha-1+\vep/2)\sqrt{n}\log n
/(4\beta)$  in generation $[n^\alpha]-n$ is
$$\aligned
   &E\left(Z^{(n)}_{[n^\alpha]-n}\, |\, G^{(n)}_{[n^\alpha]-n}\right)\cdot
   P\left(S_{[n^\alpha] -n} \geq (\alpha-1+\vep/2)
   /(4\beta)\cdot\sqrt{n}\log n\right)\\
  \sim&\frac{\sigma^2([n^\alpha] -n) }{2}\cdot n^{-(\alpha-1+\vep/2)}
  \sim  \frac{n^{1-\vep/2}\ \sigma^2}{2}.
\endaligned
$$
However, by relation (\ref{eq:survivalToGenM}), the probability
that a Galton-Watson process survives to time $n$ is  $\sim
2/(n\sigma^2)$, hence the number of particles to the \emph{right}
of $(\alpha-1+\vep/2)/(4\beta)\cdot \sqrt{n}\log n $ in generation
$[n^\alpha]-n$  whose families survive to time $[n^\alpha]$  has
expectation asymptotically equivalent to $ n^{-\vep/2}$, which
goes to 0. Therefore if we denote by
$$
    \aligned &R'^{(n)}_{[n^\alpha]}\\
    =&\text{the rightmost location in generation } [n^\alpha] \mbox{ of the descendants of the
         particles }\\
    &\text{which are to the  \emph{left} of } (\alpha-1+\vep/2)/(4\beta)\cdot\sqrt{n}\log n
     \text{ in generation } [n^\alpha]-n,
\endaligned$$
then it suffices to show further that
\begin{equation}\label{eq:R'_bd}
   P\left(\left.R'^{(n)}_{[n^\alpha]}\geq (\alpha-1+\vep)/(4\beta)\cdot\sqrt{n}\log n \,
   \right|\, G^{(n)}_{[n^\alpha]-n}\right)
    \to 0.
\end{equation}
By Lemma \ref{lemma:comparison}, this probability is not decreased
if we assume  all the particles  to the left of
$(\alpha-1+\vep/2)/(4\beta)\cdot \sqrt{n}\log n $  at time
$[n^\alpha]-n$ are located at $M_n$, where
\[\aligned
    M_n:=
    \left\{\aligned &\mbox{the biggest even number } \leq
                              (\alpha-1+\vep/2)/(4\beta)\cdot\sqrt{n}\log n , \\
                              &\hskip8.5cm\text{if }
                                [n^\alpha]-n \mbox{ is even};\\
                       &\mbox{the biggest odd number }\leq
                       (\alpha-1+\vep/2)/(4\beta)\cdot\sqrt{n}\log n , \\
                       &\hskip8.5cm\text{ if }
                       [n^\alpha]-n \mbox{ is odd}. \endaligned
                       \right.
\endaligned
\]
 In either
case, in order that $R'^{(n)}_{[n^\alpha]}\geq
(\alpha-1+\vep)/(4\beta)\cdot\sqrt{n}\log n $, since the ancestors
are to the left of $(\alpha-1+\vep/2)/(4\beta)\cdot\sqrt{n}\log n
$, at least one descendent will have to travel to the right at
least $\vep/(8\beta)\cdot\sqrt{n}\log n $ distance. Hence, since
the BRW is critical, we get
\begin{equation}\label{eqn:R'}
\aligned
   &P\left(R'^{(n)}_{[n^\alpha]}\geq (\alpha-1+\vep)/(4\beta)\cdot\sqrt{n}\log n\
    |\ G^{(n)}_{[n^\alpha]-n}\right) \\
   \leq &\ E(Z^{(n)}_{[n^\alpha]-n}\, |\, G^{(n)}_{[n^\alpha]-n})\cdot P^{M_n}
   (S_n\geq M_n + \vep/(8\beta)\cdot \sqrt{n}\log n
    ).
\endaligned
\end{equation}
 By Lemma
\ref{lemma:coupling},
\begin{equation}\label{eq:ldp}
   P^{M_n}\left(S_n\geq M_n + \vep/(8\beta)\cdot \sqrt{n}\log n \right)
   \leq P^{M_n}\left(\wt{S}_n\geq M_n +
     \vep/(8\beta)\cdot \sqrt{n}\log n\right).
\end{equation}
When $n$ is sufficiently large, $M_n$ will be bigger than
$\vep/(8\beta)\cdot \sqrt{n}\log n$, so by
Lemma~\ref{lemma:sy_srw} we get that the probability on the right
side of \eqref{eq:ldp} is bounded by
$C\exp\left(-b\vep^2/(64\beta^2)\cdot (\log n)^2\right)$. Using
\eqref{eqn:R'}, noting that $E(Z^{(n)}_{[n^\alpha]-n} |
G^{(n)}_{[n^\alpha]-n})$ $=O(n^\alpha)$ only grows polynomially in
$n$, we get~\eqref{eq:R'_bd}.
\end{proof}

\section{Proof of Theorem \ref{thm:measure_limit_super}}

We start with a simple observation. The following lemma about the
probabilities of survival is a supplement to the convergence in
\eqref{eq:feller_conv}.
\begin{lemma}\label{lemma:surv_prob_conv} For the total mass processes
$(Z^{(n)}_{[n^\alpha t]})_{t\geq 0}$ and the Feller diffusion
$(Y_t)_{t\geq 0}$, the following convergence holds:
\begin{equation}\label{eq:surv_prob_conv}
  P\left(Z^{(n)}_{[n^\alpha t]}>\delta n^\alpha\right)
  \to P\left(Y_t>\delta\right),\quad\mbox{ for all }\delta \geq 0 \mbox{ and for all }
  t> 0.
\end{equation}
\end{lemma}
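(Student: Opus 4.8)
The plan is to deduce the pointwise convergence \eqref{eq:surv_prob_conv} from the process-level convergence \eqref{eq:feller_conv}, which we are given. The only obstacle is that the map $z \mapsto \indic\{z > \delta\}$ is not continuous, so weak convergence does not immediately hand us convergence of the probabilities; we must check that the limiting Feller diffusion $Y_t$ puts no mass at the threshold value $\delta$. Thus the first step is the trivial one: for $\delta = 0$ write $P(Z^{(n)}_{[n^\alpha t]} > 0) = \rho_{[n^\alpha t]}$ and apply Kolmogorov's estimate \eqref{eq:survivalToGenM}, which gives $\rho_{[n^\alpha t]} \sim 2/([n^\alpha t]\sigma^2) \to 0 = P(Y_t > 0)$ since $Y_t$, started at $y>0$, has an atom at $0$ of mass… wait — actually $P(Y_t > 0) = 1 - \exp(-2y/(\sigma^2 t))$ is \emph{not} zero. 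So instead, for $\delta = 0$ one should observe directly that $P(Y_t > 0) = 1 - P(Y_t = 0)$ and that $P(Z^{(n)}_{[n^\alpha t]} > 0)$ converges to this via the known convergence of survival probabilities for critical Galton–Watson processes to the extinction-by-time-$t$ probability of the Feller diffusion (this is classical, e.g.\ from the explicit Laplace transform of $Y_t$); alternatively one notes $P(Y_t = 0) = \lim_{\delta \downarrow 0} P(0 < Y_t \le \delta)$ and handles $\delta = 0$ as a limiting case of the $\delta > 0$ analysis below.

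For $\delta > 0$ the argument is clean. By \eqref{eq:feller_conv} we have $Z^{(n)}_{[n^\alpha t]}/n^\alpha \Rightarrow Y_t$ in $\bR$ for each fixed $t > 0$ (evaluation at a fixed time is a consequence of convergence in $D([0,\infty);\bR)$ once we know $t$ is a.s.\ a continuity point of the limit, which holds since $Y$ has continuous paths). The distribution of $Y_t$ for $t>0$ is absolutely continuous on $(0,\infty)$ — indeed $Y_t$ conditioned on $\{Y_t>0\}$ has the explicit exponential-type density coming from the Laplace transform $Ee^{-\lambda Y_t} = \exp(-2y\lambda/(2+\sigma^2 t\lambda))$ (see Chapter XI of \cite{RevuzYor}) — so $P(Y_t = \delta) = 0$ for every $\delta > 0$. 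Hence $\{z : z > \delta\}$ is a continuity set for the law of $Y_t$, and the Portmanteau theorem gives
\[
  P\!\left(Z^{(n)}_{[n^\alpha t]} > \delta n^\alpha\right) = P\!\left(\frac{Z^{(n)}_{[n^\alpha t]}}{n^\alpha} > \delta\right) \to P(Y_t > \delta).
\]

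The main (and essentially only) thing to be careful about is the boundary case $\delta = 0$, since $0$ is precisely the atom of $Y_t$; one cannot invoke Portmanteau there. I would handle it by monotone continuity: $P(Z^{(n)}_{[n^\alpha t]} > 0) \ge P(Z^{(n)}_{[n^\alpha t]} > \delta n^\alpha) \to P(Y_t > \delta)$ for every $\delta > 0$, so $\liminf_n P(Z^{(n)}_{[n^\alpha t]} > 0) \ge \sup_{\delta>0} P(Y_t>\delta) = P(Y_t > 0)$; for the reverse inequality use the classical fact (Feller, via the generating-function recursion and \eqref{eq:survivalToGenM}) that $P(Z^{(n)}_{[n^\alpha t]} > 0)$ converges to $1 - P(Y_t = 0) = P(Y_t > 0)$, which pins down the limit. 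This completes the verification of \eqref{eq:surv_prob_conv} for all $\delta \ge 0$ and all $t > 0$.
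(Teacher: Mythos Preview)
Your argument for $\delta>0$ is exactly the paper's: marginal weak convergence from \eqref{eq:feller_conv} together with $P(Y_t=\delta)=0$ gives the conclusion via Portmanteau. For $\delta=0$, however, your exposition wanders. The line $P(Z^{(n)}_{[n^\alpha t]}>0)=\rho_{[n^\alpha t]}$ is simply wrong here: in the setting of Theorem~\ref{thm:measure_limit_super} the process starts from $Z^{(n)}_0\sim yn^\alpha$ particles, not one. You catch this, but then your ``reverse inequality'' appeals to ``the classical fact \dots\ that $P(Z^{(n)}_{[n^\alpha t]}>0)$ converges to $P(Y_t>0)$'' --- which is precisely the statement to be proved, so the $\liminf$ half becomes redundant and the $\limsup$ half is unproved.

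The paper dispatches $\delta=0$ by a direct one-line computation. By independence of the Galton--Watson families of the distinct initial particles,
\[
  P\bigl(Z^{(n)}_{[n^\alpha t]}=0\bigr)=(1-\rho_{[n^\alpha t]})^{Z^{(n)}_0}
  \sim\exp\!\left(-\frac{2}{n^\alpha t\sigma^2}\,Z^{(n)}_0\right)
  \longrightarrow \exp\!\left(-\frac{2y}{t\sigma^2}\right)=P(Y_t=0),
\]
using \eqref{eq:survivalToGenM} and \eqref{eq:total_mass_conv}. This is the content of your ``classical fact''; just write it out and delete the monotone-continuity detour.
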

\begin{proof}
For any $t>0$, the convergence in \eqref{eq:surv_prob_conv} when
$\delta>0$ follows from the marginal convergence
$Z^{(n)}_{[n^\alpha t]}/n^\alpha\Longrightarrow Y_t$ and that
$P(Y_t=\delta)=0$ (for any fixed $t>0$, by
\eqref{eq:survivalToGenM} and \eqref{eqn:lim_exp} it is easy to
show that the marginal distribution of $Y_t$ can be described as a
Poisson sum of exponentials, see, e.g., page 136 in
\cite{PerkinsSFnotes}, hence is continuous on $(0,\infty)$; see
also page 441 in \cite{RevuzYor} for an explicit density formula).
It remains to show
\[
  P\left(Z^{(n)}_{[n^\alpha t]}>0\right) \to P(Y_t>0).
\]
In fact, by the independence between the BRWs engendered by
different initial particles,
\[
  P\left(Z^{(n)}_{[n^\alpha t]}=0\right)
  =(1 - \rho_{[n^\alpha t]})^{Z^{(n)}_{0}},
\]
where $\rho_m$, as defined in \eqref{eq:survivalToGenM}, is the
probability that a Galton-Watson process started by a single
particle survives to generation $m$. By \eqref{eq:survivalToGenM}
and \eqref{eq:total_mass_conv},
\[
   (1 - \rho_{[n^\alpha t]})^{Z^{(n)}_{0}}
   \sim\exp\left(-\frac{2}{n^\alpha t \sigma^2}\cdot Z^{(n)}_{0} \right)
   \to \exp\left(-\frac{2y}{t\sigma^2}\right).
\]
The right side equals $P(Y_t=0)$, see, e.g., equation (II.5.12) in
\cite{PerkinsSFnotes}.
\end{proof}

\begin{proof}[Proof of Theorem \ref{thm:measure_limit_super}]

\bigskip\noindent\textbf{A. Convergence of Marginal distributions.}
We will show that for any fixed $t> 0$,  on the Skorokhod space $
D([0,\infty);\zz{R})$,
\begin{equation}\label{eq:marginal}
   \left(\frac{X_{[n^\alpha t]}^{(n)}([0,\sqrt{n}a])}{n^\alpha}\right)_{a\geq 0}
   \Longrightarrow \left(X_t([0,a])=Y_t\cdot\pi([0,a])\right)_{a\geq
   0}.
\end{equation}
Let  $L(n):=[(\log n)^2]$, and write
\[
  \frac{X_{[n^\alpha t]}^{(n)}([0,\sqrt{n}a])}{n^\alpha}
  = \frac{Z^{(n)}_{[n^\alpha t]- nL(n)}}{n^\alpha}\cdot
  \frac{X_{[n^\alpha t]}^{(n)}([0,\sqrt{n}a])}{Z^{(n)}_{[n^\alpha t-
  nL(n)]}}\cdot \indic_{\{Z^{(n)}_{[n^\alpha t]- nL(n)}>0\}}.
\]
For any $a\geq 0$ and $\delta>0$, we will show the following law of
large numbers:
\begin{equation}\label{eq:lln}
 \left(\frac{X_{[n^\alpha t]}^{(n)}([0,\sqrt{n}a])}{Z^{(n)}_{[n^\alpha t]-
  nL(n)}} - \pi([0,a])\right)\cdot \indic_{\{Z^{(n)}_{[n^\alpha t]- nL(n)}>\delta n^\alpha\}}
  \to 0. 
\end{equation}
\noindent{\bf Claim}: If this holds, then we have the
finite-dimensional convergence below: for any $ k\in \zz{N}$ and
any $0\leq a_1\leq\ldots\leq a_k<\infty,$
\begin{equation}\label{eq:marginal_fd}
   \left(\frac{X_{[n^\alpha
   t]}^{(n)}([0,\sqrt{n}a_i])}{n^\alpha}\right)_{a_1,\ldots,a_k}
   \Longrightarrow
   \left(Y_t\cdot\pi([0,a_i])\right)_{a_1,\ldots,a_k}.
\end{equation}
Note that the LHS and RHS of \eqref{eq:marginal} are both
increasing processes and the RHS is continuous,  by Theorem
VI.3.37 in \cite{JS03}, the above finite-dimensional convergence
implies the convergence \eqref{eq:marginal} as processes on
$[0,\infty)$.

We now prove the claim, which is a direct consequence of Lemma
\ref{lemma:surv_prob_conv}, \eqref{eq:feller_conv},   Slutsky's
theorem and \eqref{eq:lln}.  We shall only prove the convergence
for any single $a\geq 0$; the joint convergence can be proved
similarly. Let $f:\zz{R}\to\zz{R}$ be any bounded Lipschitz
continuous function. We want to show that
\begin{equation}\label{eq:lln_2_conv}
 E f\left(\frac{X_{[n^\alpha
   t]}^{(n)}([0,\sqrt{n}a])}{n^\alpha}\right) \to Ef(Y_t\cdot\pi[0,a]).
\end{equation}
In fact, for any $\vep>0$, there exists $\delta>0$ such that
\[
   P(0<Y_t\leq\delta)\leq \vep.
\]
By Lemma \ref{lemma:surv_prob_conv}, for all $n$ sufficiently large,
\[
   P(0<Z^{(n)}_{[n^\alpha t]- nL(n)}\leq\delta n^\alpha)\leq 2\vep.
\]
Hence, denote by $M=\max_x |f(x)|$,
\[\aligned
   &\left|E f\left(X_{[n^\alpha t]}^{(n)}([0,\sqrt{n}a])/n^\alpha\right)
         - Ef(Y_t\cdot\pi[0,a])\right|\\
   \leq&\left|f(0)\cdot P\left(Z^{(n)}_{[n^\alpha t]- nL(n)}=0\right) - f(0)\cdot
   P(Y_t=0)\right|    +3M\vep\\
   +&\left|E \left(f\left(\frac{Z^{(n)}_{[n^\alpha t]- nL(n)}}{n^\alpha}\cdot
   \frac{X_{[n^\alpha t]}^{(n)}[0,\sqrt{n}a]}{Z^{(n)}_{[n^\alpha t-
  nL(n)]}}\right)\indic_{\{Z^{(n)}_{[n^\alpha t]- nL(n)}>\delta
  n^\alpha\}}\right)
         \right.\\
         &\quad\left.- E \left(f\left(\frac{Z^{(n)}_{[n^\alpha t]- nL(n)}}{n^\alpha}\cdot
         \pi[0,a]\right)\indic_{\{Z^{(n)}_{[n^\alpha t]- nL(n)}>\delta n^\alpha\}}\right)\right|\\
   +& \left|E \left(f\left(\frac{Z^{(n)}_{[n^\alpha t]- nL(n)}}{n^\alpha}\cdot
         \pi[0,a]\right)\indic_{\{Z^{(n)}_{[n^\alpha t]- nL(n)}>\delta
         n^\alpha\}}\right)\right.\\
         &\quad\left.- E\left(f(Y_t\cdot\pi[0,a])\cdot
         \indic_{\{Y_t>\delta\}}\right)\right|\\
   :=& I + 3M\vep + II + III.
\endaligned
\]
By Lemma \ref{lemma:surv_prob_conv}, $I\to 0$. By
\eqref{eq:feller_conv} and  Slutsky's theorem, $III\to 0.$
Finally, $II\to 0$ by the Lipschitz continuity of $f$,
\eqref{eq:feller_conv}, \eqref{eq:lln} and the dominated
convergence theorem.

We now prove the  law of large numbers \eqref{eq:lln}, by using a
mean-variance calculation. Let $\mathcal{F}^{(n)}_{[n^\alpha t]-
nL(n)}$ be the configuration of the BRW at time $[n^\alpha t]-
nL(n)$, $\mathcal{Z}^{(n)}_{[n^\alpha t]- nL(n)}$ be the set of
particles at time $[n^\alpha t]- nL(n)$, and for each particle
$u_i=u_i^{(n)}\in \mathcal{Z}^{(n)}_{[n^\alpha t]- nL(n)}$, let
$x_i=x_i^{(n)}$ be its location (at time $[n^\alpha t]- nL(n)$),
$U^{u_i}_k(x)$ be its number of descendants at site $x$ at time $k +
[n^\alpha t]- nL(n)$, and $Z_k^{u_i}$ be its total number of
descendants at time $k+[n^\alpha t]- nL(n)$.

We start with the mean calculation.
\[\aligned
  &E \left(\frac{X_{[n^\alpha t]}^{(n)}[0,\sqrt{n}a]}{Z^{(n)}_{[n^\alpha t]- nL(n)}}
  \cdot \indic_{\{Z^{(n)}_{[n^\alpha t]- nL(n)}>\delta n^\alpha\}}\right)\\
 =& E\left(\frac{E\left(X_{[n^\alpha t]}^{(n)}[0,\sqrt{n}a]\,|\,\mathcal{F}^{(n)}_{[n^\alpha t]- nL(n)}\right)}
 {Z^{(n)}_{[n^\alpha t]- nL(n)}}\cdot \indic_{\{Z^{(n)}_{[n^\alpha t]- nL(n)}>\delta n^\alpha\}}\right)\\
\endaligned
\]
By relation \eqref{eq:fundamental},
\[\aligned
   &E\left(X_{[n^\alpha t]}^{(n)}[0,\sqrt{n}a]\,|\,\mathcal{F}^{(n)}_{[n^\alpha t-
   nL(n)]}\right)\\
   =&\sum_{i=1}^{Z^{(n)}_{[n^\alpha t]- nL(n)}}
   P(S_{nL(n) }\in
   [0,\sqrt{n}a]\,|\,S_0=x_i).
\endaligned
\]

By Lemma  \ref{lemma:comparison}, if we let
\[\left\{
\aligned
  p^{0}_{nL(n)}&:=P(S_{nL(n) }\in
   [0,\sqrt{n}a]\,|\,S_0=0)\\
   p^{1}_{nL(n)}&:=P(S_{nL(n) }\in
   [0,\sqrt{n}a]\,|\,S_0=1),
\endaligned \right.
\]
then
\begin{equation}\label{eq:bd_pb01}
P(S_{nL(n) }\in
   [0,\sqrt{n}a]\,|\,S_0=x_i)\leq
\left\{ \aligned
   p^{0}_{nL(n)},\quad&\mbox{if } x_i
   \mbox{ is even}\\
   p^{1}_{nL(n)},\quad&\mbox{if } x_i
   \mbox{ is odd}.\\
\endaligned \right.
\end{equation}
Therefore, by Proposition \ref{prop:RW_conv_rate} and Lemma
\ref{lemma:surv_prob_conv},
\begin{equation}\label{eq:lln_upper_bd}
\aligned
   &E\left(\frac{E\left(X_{[n^\alpha t]}^{(n)}[0,\sqrt{n}a]\,|\,\mathcal{F}^{(n)}_{[n^\alpha t]- nL(n)}\right)}
 {Z^{(n)}_{[n^\alpha t]- nL(n)}}\cdot \indic_{\{Z^{(n)}_{[n^\alpha t]- nL(n)}>\delta n^\alpha\}}\right)\\
 \leq & E\left(p^{0}_{nL(n)}\vee p^{1}_{nL(n)}\cdot \indic_{\{Z^{(n)}_{[n^\alpha t]- nL(n)}>\delta n^\alpha\}}\right)
 \to \pi[0,a]\cdot P(Y_t>\delta).
\endaligned
\end{equation}

On the other hand, by relation \eqref{eq:fundamental} again, for
any $C>0$,
\[
  E\left(\frac{1}{n^\alpha} X_{[n^\alpha t]- nL(n)}^{(n)}([C\sqrt{n},\infty)) \right)
  =\frac{1}{n^\alpha} \sum_{i=1}^{Z_{0}^{(n)}}
  P\left(S_{[n^\alpha t]- nL(n)} \geq C\sqrt{n}\,|\,S_0=x_{0;i}^{(n)}\right),
\]
where $Z_{0}^{(n)}$ is the total number of particles at time $0$,
and  $x_{0;i}^{(n)}$ is the location of the $i$th initial
particle. By the tightness of $\{X_{0}^{(n)}(\sqrt{n}\cdot)
/n^\alpha\}$ \eqref{eq:ini_measure_tight} and
\eqref{eq:conv_normal_range} we see for any $\vep>0$, there exists
$C>0$ such that for all $n$ sufficiently large,
\begin{equation}\label{eq:mess_right_side}
  E\left(\frac{1}{n^\alpha}  X_{[n^\alpha t]- nL(n)}^{(n)}([C\sqrt{n},\infty)) \right)
 \leq \vep.
\end{equation}
Therefore by Markov's inequality,
\begin{equation}\label{eq:outer_mass_neg}
  P\left(\frac{1}{n^\alpha}X_{[n^\alpha t]- nL(n)}^{(n)}([C\sqrt{n},\infty)) \geq \sqrt{\vep}\right)
  \leq \sqrt{\vep}.
\end{equation}
Now by Lemma  \ref{lemma:comparison} again, for those particles
$u_i$ at time $[n^\alpha t]- nL(n)$ which are \emph{to the left}
of $C\sqrt{n}$, if we let
\[\aligned
       \left\{\aligned  M_{n;even} &=\mbox{the biggest even number } \leq
                              C\sqrt{n}      ;\\
                       M_{n;odd} &=\mbox{the biggest odd number }\leq
                       C\sqrt{n}, \endaligned
                       \right.
\endaligned
\]
and
\[\left\{
\aligned
  p^{even}_{nL(n)}&:=P(S_{nL(n) }\in
   [0,\sqrt{n}a]\,|\,S_0=M_{n;even})\\
   p^{odd}_{nL(n)}&:=P(S_{nL(n) }\in
   [0,\sqrt{n}a]\,|\,S_0=M_{n;odd}),
\endaligned \right.
\]
then
\begin{equation}\label{eq:bd_pbeo}
P(S_{nL(n) }\in
   [0,\sqrt{n}a]\,|\,S_0=x_i)\geq
\left\{ \aligned
   p^{even}_{nL(n)},\quad&\mbox{if } x_i
   \mbox{ is even}\\
   p^{odd}_{nL(n)},\quad&\mbox{if } x_i
   \mbox{ is odd}.\\
\endaligned \right.
\end{equation}
Hence, by Proposition \ref{prop:RW_conv_rate}, Lemma
\ref{lemma:surv_prob_conv} and \eqref{eq:outer_mass_neg},
\begin{equation}\label{eq:lln_lower_bd}
\aligned
   &\liminf_n E\left(\frac{E\left(X_{[n^\alpha t]}^{(n)}[0,\sqrt{n}a]\,|\,\mathcal{F}^{(n)}_{[n^\alpha t]- nL(n)}\right)}
 {Z^{(n)}_{[n^\alpha t]- nL(n)}}\cdot \indic_{\{Z^{(n)}_{[n^\alpha t]- nL(n)}>\delta n^\alpha\}}\right)\\
   \geq &\liminf_n E\left(\frac{E\left(X_{[n^\alpha t]}^{(n)}[0,\sqrt{n}a]\,|\,\mathcal{F}^{(n)}_{[n^\alpha t]- nL(n)}
   \right)}
 {Z^{(n)}_{[n^\alpha t]- nL(n)}}\right.\\
   &\qquad \left.\cdot \indic_{\{Z^{(n)}_{[n^\alpha t]- nL(n)}>\delta n^\alpha;\ X_{[n^\alpha t]- nL(n)}^{(n)}
   ([C\sqrt{n},\infty))\leq n^{\alpha}\sqrt{\vep}\}}\right)\\
 \geq & \liminf_n E\left(\frac{Z^{(n)}_{[n^\alpha t]- nL(n)}-n^\alpha \sqrt{\vep}}{Z^{(n)}_{[n^\alpha t]- nL(n)}}
 \ p^{even}_{nL(n)}\wedge p^{odd}_{nL(n)}\right.\\
   &\qquad \left.\cdot \indic_{\{Z^{(n)}_{[n^\alpha t]- nL(n)}>\delta n^\alpha;\ X_{[n^\alpha t]- nL(n)}^{(n)}
   ([C\sqrt{n},\infty))\leq   n^{\alpha}\sqrt{\vep}\}}\right)\\
\geq &\left(1-\frac{\sqrt{\vep}}{\delta}\right)\cdot \pi[0,a]\cdot
\left(P(Y_t>\delta)-\sqrt{\vep}\right).
\endaligned
\end{equation}
By the arbitrariness of $\vep$, we get the desired lower bound
\[\aligned
  &\liminf_n E\left(\frac{E\left(X_{[n^\alpha t]}^{(n)}([0,\sqrt{n}a])\,|\,
  \mathcal{F}^{(n)}_{[n^\alpha t]- nL(n)}\right)}
 {Z^{(n)}_{[n^\alpha t]- nL(n)}}\cdot \indic_{\{Z^{(n)}_{[n^\alpha t]- nL(n)}>\delta n^\alpha\}}\right)\\
 \geq &\pi[0,a]\cdot
P(Y_t>\delta).
\endaligned
\]
So, combining it with \eqref{eq:lln_upper_bd}, we get the
convergence of expectation
\[
\lim_n E \left(\left(\frac{X_{[n^\alpha
t]}^{(n)}[0,\sqrt{n}a]}{Z^{(n)}_{[n^\alpha t]- nL(n)}}-\pi[0,a]\right) \cdot \indic_{\{Z^{(n)}_{[n^\alpha t]- nL(n)}>\delta n^\alpha\}}\right)\\
 = 0.
\]

It remains to show that
\begin{equation}\label{eq:var_2_0}
 \lim_n \var\left(\left(\frac{X_{[n^\alpha t]}^{(n)}[0,\sqrt{n}a]}{Z^{(n)}_{[n^\alpha t]- nL(n)}}
  - \pi[0,a]\right)\cdot \indic_{\{Z^{(n)}_{[n^\alpha t]- nL(n)}>\delta n^\alpha\}}\right)= 0.
\end{equation}
By conditioning on $\mathcal{F}^{(n)}_{[n^\alpha t]- nL(n)}$, we get
\[\aligned
   &\var\left(\left(\frac{X_{[n^\alpha t]}^{(n)}[0,\sqrt{n}a]}{Z^{(n)}_{[n^\alpha t]-
  nL(n)}} - \pi[0,a]\right)\cdot \indic_{\{Z^{(n)}_{[n^\alpha t]- nL(n)}>\delta n^\alpha\}}\right)\\
 =&E\left(\left(\frac{\var\left(X_{[n^\alpha t]}^{(n)}[0,\sqrt{n}a]\,|\, \mathcal{F}^{(n)}_{[n^\alpha t]- nL(n)}\right)}
 {\left(Z^{(n)}_{[n^\alpha t]-  nL(n)}\right)^2} \right)\cdot \indic_{\{Z^{(n)}_{[n^\alpha t]- nL(n)}>\delta n^\alpha\}}\right)\\
&\quad +
  \var\left(\left(\frac{E\left(X_{[n^\alpha t]}^{(n)}[0,\sqrt{n}a]\,|\, \mathcal{F}^{(n)}_{[n^\alpha t]- nL(n)}\right)}
  {Z^{(n)}_{[n^\alpha t]-  nL(n)}}  - \pi[0,a] \right)\cdot
  \indic_{\{Z^{(n)}_{[n^\alpha t]- nL(n)}>\delta n^\alpha\}}\right)\\
 :=& I + II.
\endaligned
\]
We will show that both terms converge to 0.

We start with term I. Recall that for each particle
$u_i\in\mathcal{Z}^{(n)}_{[n^\alpha t]- nL(n)}$, $U^{u_i}_k(x)$
denotes its number of descendants at site $x$ at time $k + [n^\alpha
t]- nL(n)$, and $Z_k^{u_i}$ is its total number of descendants at
time $k+[n^\alpha t]- nL(n)$.  By the independence between the BRWs
$U^{u_i}$ and \eqref{eq:var_Z_m},
\[\aligned
   &\var\left(X_{[n^\alpha t]}^{(n)}[0,\sqrt{n}a]\,|\, \mathcal{F}^{(n)}_{[n^\alpha t]- nL(n)}\right)\\
   =&\sum_{u_i\in \mathcal{Z}^{(n)}_{[n^\alpha t]- nL(n)}} \var\left(\sum_{x\in [0,\sqrt{n}a]}U^{u_i}_{nL(n) }(x) \right)\\
   \leq & \sum_{u_i\in \mathcal{Z}^{(n)}_{[n^\alpha t]- nL(n)}}
   E\left(Z_{nL(n) }^{u_i}\right)^2\\
   =&Z^{(n)}_{[n^\alpha t]- nL(n)}(1+ nL(n) \sigma^2).
\endaligned
\]
Hence
\[
  I\leq E\left(\frac{1+ nL(n) \sigma^2}{Z^{(n)}_{[n^\alpha t]-  nL(n)}}
  \cdot \indic_{\{Z^{(n)}_{[n^\alpha t]- nL(n)}>\delta n^\alpha\}}\right) \to 0.
\]

As to term II, by \eqref{eq:bd_pb01},
\[
   E\left(X_{[n^\alpha t]}^{(n)}([0,\sqrt{n}a])\,|\, \mathcal{F}^{(n)}_{[n^\alpha t]- nL(n)}\right)
   \leq Z^{(n)}_{[n^\alpha t]- nL(n)}\cdot p^{0}_{nL(n)}\vee p^{1}_{nL(n)}
\]
furthermore, on the event $\{X_{[n^\alpha t]-
nL(n)}^{(n)}([C\sqrt{n},\infty)) \leq n^{\alpha}\sqrt{\vep}\}$, by
\eqref{eq:bd_pbeo},
\[
   E\left(X_{[n^\alpha t]}^{(n)}([0,\sqrt{n}a])\,|\, \mathcal{F}^{(n)}_{[n^\alpha t]- nL(n)}\right)
   \geq \left(Z^{(n)}_{[n^\alpha t]-  nL(n)}- n^{\alpha}\sqrt{\vep}\right)\cdot
   p^{even}_{nL(n)}\wedge p^{odd}_{nL(n)}
\]
Hence,
\[\aligned
   &II\\
   \leq& E\left(\left(\frac{E(X_{[n^\alpha t]}^{(n)}([0,\sqrt{n}a])\,|\, \mathcal{F}^{(n)}_{[n^\alpha t]- nL(n)})}
  {Z^{(n)}_{[n^\alpha t]-  nL(n)}}  - \pi[0,a] \right)^2\cdot \indic_{\{Z^{(n)}_{[n^\alpha t]- nL(n)}>\delta n^\alpha\}}
  \right)\\
  \leq& \sqrt{\vep}\\
  +&  E\left(\max\left(\left(p^{0}_{nL(n)}\vee p^{1}_{nL(n)}  - \pi[0,a]\right)^2,
   \left((1-\sqrt{\vep}/\delta)\ p^{even}_{nL(n)}\wedge p^{odd}_{nL(n)} - \pi[0,a]\right)^2  \right) \right.\\
   &\quad\left.\cdot \indic_{\{Z^{(n)}_{[n^\alpha t]- nL(n)}>\delta n^\alpha;\ X_{[n^\alpha t]-  nL(n)}^{(n)}([C\sqrt{n},\infty)) \leq
  n^{\alpha}\sqrt{\vep}\}}\right)\\
  =&O(\sqrt{\vep}),
\endaligned
\]
where the term $\sqrt{\vep}$ in the second inequality comes from
\eqref{eq:outer_mass_neg}, and in the last equation we used
Proposition~\ref{prop:RW_conv_rate}. By the arbitrariness of
$\vep$, $II\to 0$ and hence \eqref{eq:var_2_0} holds.

\bigskip\noindent
{\bf B.  Convergence of Finite Dimensional Distributions.} This
follows from the Markov property and similar calculations as in
Part A.
\end{proof}

\section*{Acknowledgments}
The author thanks Steven P. Lalley for  helpful discussions.

\end{document}